\newtheorem{thm}{Theorem}[section]
\newtheorem{prop}[thm]{Proposition}
\newtheorem{lemma}[thm]{Lemma}
\newtheorem{lemma-def}[thm]{Lemma-Definition}
\newtheorem{theorem}{Theorem}[section]
\newtheorem{definition}[thm]{Definition}
\newtheorem{remark}[thm]{Remark}
 \font\Russ=wncyr10   scaled\magstep 1
\def\Sha{\hbox{\Russ\char88}}
\newcommand{\Zp}{{\mathbb{Z}_p}}
\newcommand{\Qp}{{\mathbb{Q}_p}}
\newcommand{\ZpG}{\mathbb{Z}_p[G]}
\newcommand{\CpG}{\mathbb{C}_p[G]}
\newcommand{\Cp}{{\mathbb{C}_p}}
\newcommand{\Ze}{{\mathbb{Z}}}
\newcommand{\Ce}{{\mathbb{C}}}
\newcommand{\Qu}{\mathbb{Q}}
\newcommand{\QG}{\Qu[G]}
\newcommand{\calP}{\mathcal{P}}
\newcommand{\QQ}{\mathbb Q}
\newcommand{\ZZ}{\mathbb Z}
\newcommand{\RR}{\mathbb R}
\newcommand{\CC}{\mathbb C}
\newcommand{\bz}{\mathbb Z}
\newcommand{\ZG}{{\bz [G]}}
\newcommand{\calI}{\mathcal I}
\newcommand{\calL}{\mathcal L}
\newcommand{\F}{\mathbb F}
\newcommand{\IpG}{{I}_p(G)}
\newcommand{\lra}{\longrightarrow}
\newcommand{\sseq}{\subseteq}
\newcommand{\Fp}{{\mathbb{F}_p}}
\DeclareMathOperator{\ind}{ind} 
\DeclareMathOperator{\res}{res} 
 \DeclareMathOperator{\im}{im}
 \DeclareMathOperator{\Gal}{Gal}
\DeclareMathOperator{\Hom}{Hom} \DeclareMathOperator{\End}{End}
\DeclareMathOperator{\Ext}{Ext}
\DeclareMathOperator{\Tr}{Tr}
\DeclareMathOperator{\Sel}{\rm Sel}
\DeclareMathOperator{\Map}{\rm Map}
\DeclareMathOperator{\ord}{ord}
\renewcommand{\det}{\text{det}}
\def\YEAR{\year}\newcount\VOL\VOL=\YEAR\advance\VOL by-1995
\def\firstpage{1}\def\lastpage{1000}
\def\received{}\def\revised{}
\def\communicated{}
\def\magnification{\afterassignment\m@g\count@}
\def\m@g{\mag=\count@\hsize6.5truein\vsize8.9truein\dimen\footins8truein}
\font\eightrm=cmr8
\font\caps=cmcsc10
\font\Caps=cmcsc10 scaled \magstep1   
\def\DocMath{}
\renewcommand{\@evenhead}{%
    \ifnum\thepage>\lastpage\rlap{\thepage}\hfill%
    \else\rlap{\thepage}\slshape\leftmark\hfill{\caps\SAuthor}\hfill\fi}%
\renewcommand{\@oddhead}{%
    \ifnum\thepage=\firstpage{\DocMath\hfill\llap{\thepage}}%
    \else{\slshape\rightmark}\hfill{\caps\STitle}\hfill\llap{\thepage}\fi}%
\def\TSkip{\bigskip}
\newbox\TheTitle{\obeylines\gdef\GetTitle #1
\ShortTitle  #2
\SubTitle    #3
\Author      #4
\ShortAuthor #5
\EndTitle
{\setbox\TheTitle=\vbox{\baselineskip=20pt\let\par=\cr\obeylines%
\halign{\centerline{\Caps##}\cr\noalign{\medskip}\cr#1\cr}}%
        \copy\TheTitle\TSkip\TSkip%
\def\next{#2}\ifx\next\empty\gdef\STitle{#1}\else\gdef\STitle{#2}\fi%
\def\next{#3}\ifx\next\empty%
    \else\setbox\TheTitle=\vbox{\baselineskip=20pt\let\par=\cr\obeylines%
    \halign{\centerline{\caps##} #3\cr}}\copy\TheTitle\TSkip\TSkip\fi%
\centerline{\caps #4}\TSkip\TSkip%
\def\next{#5}\ifx\next\empty\gdef\SAuthor{#4}\else\gdef\SAuthor{#5}\fi%
\ifx\received\empty\relax
    \else\centerline{\eightrm Received: \received}\fi%
\ifx\revised\empty\TSkip%
    \else\centerline{\eightrm Revised: \revised}\TSkip\fi%
\ifx\communicated\empty\relax
    \else\centerline{\eightrm Communicated by \communicated}\fi\TSkip\TSkip%
\catcode'015=5}}\def\Title{\obeylines\GetTitle}
\def\Abstract{\begingroup\narrower
    \parskip=\medskipamount\parindent=0pt{\caps Abstract. }}
\def\EndAbstract{\par\endgroup\TSkip}
\long\def\MSC#1\EndMSC{\def\arg{#1}\ifx\arg\empty\relax\else
     {\par\narrower\noindent%
     1991 Mathematics Subject Classification: #1\par}\fi}
\long\def\KEY#1\EndKEY{\def\arg{#1}\ifx\arg\empty\relax\else
        {\par\narrower\noindent Keywords and Phrases: #1\par}\fi\TSkip}
\newbox\TheAdd\def\Addresses{\vfill\copy\TheAdd\vfill
    \ifodd\number\lastpage\vfill\eject\phantom{.}\vfill\eject\fi}
{\obeylines\gdef\GetAddress #1
\Address #2
\Address #3
\Address #4
\EndAddress
{\def\xs{4.3truecm}\parindent=0pt
\setbox0=\vtop{{\obeylines\hsize=\xs#1\par}}\def\next{#2}
\ifx\next\empty 
     \setbox\TheAdd=\hbox to\hsize{\hfill\copy0\hfill}
\else\setbox1=\vtop{{\obeylines\hsize=\xs#2\par}}\def\next{#3}
\ifx\next\empty 
     \setbox\TheAdd=\hbox to\hsize{\hfill\copy0\hfill\copy1\hfill}
\else\setbox2=\vtop{{\obeylines\hsize=\xs#3\par}}\def\next{#4}
\ifx\next\empty\ 
     \setbox\TheAdd=\vtop{\hbox to\hsize{\hfill\copy0\hfill\copy1\hfill}
                \vskip20pt\hbox to\hsize{\hfill\copy2\hfill}}
\else\setbox3=\vtop{{\obeylines\hsize=\xs#4\par}}
     \setbox\TheAdd=\vtop{\hbox to\hsize{\hfill\copy0\hfill\copy1\hfill}
                \vskip20pt\hbox to\hsize{\hfill\copy2\hfill\copy3\hfill}}
\fi\fi\fi\catcode'015=5}}\gdef\Address{\obeylines\GetAddress}
\begin{document}
\Title

Congruences for critical values of higher derivatives
of twisted Hasse-Weil $L$-functions, III



\ShortTitle
Congruences for derivatives of Hasse-Weil $L$-functions, III
%


\SubTitle
\Author Werner Bley and Daniel Macias Castillo

\ShortAuthor Werner Bley and Daniel Macias Castillo
\EndTitle

\Abstract 
Let $A$ be an abelian variety defined over a number field $k$, let $p$ be an odd prime number and let $F/k$ be a cyclic extension of $p$-power degree. Under not-too-stringent hypotheses we give an interpretation of the $p$-component of the relevant case of the equivariant Tamagawa number conjecture in terms of integral congruence relations involving the evaluation on appropriate points of $A$ of the ${\rm Gal}(F/k)$-valued height pairing of Mazur and Tate. We then discuss the numerical computation of this pairing, and in particular obtain the first numerical verifications of this conjecture in situations in which the $p$-completion of the Mordell-Weil group of $A$ over $F$ is not a projective Galois module.
\EndAbstract

\Address

Werner Bley,
Ludwig-Maximilians-Universit\"at M\"unchen,
Theresienstr. 39,
D-80333 M\"unchen,
Germany,
bley@math.lmu.de

\Address
Daniel Macias Castillo,
Departamento de Matem\'aticas, 
Universidad Aut\'onoma de Madrid, 28049 Madrid (Spain);
and Instituto de Ciencias Matem\'aticas, 28049 Madrid (Spain).
daniel.macias@uam.es

\Address
\Address
\EndAddress
\section{Introduction}

Let $A$ be an abelian variety defined over a number field $k$. The Birch and Swinnerton-Dyer conjecture for $A$ over $k$ (as extended to this setting by Tate) predicts a remarkable equality between the leading term $L^*(A,1)$ at $z=1$ of the Hasse-Weil $L$-series $L(A,z)$ of $A$ over $k$  (assuming that this function has a suitable meromorphic continuation) and the key algebraic invariants of $A$ over $k$.

Nevertheless, there are various natural contexts in which it seems likely that this equality does not encompass the full extent of the interplay between the leading terms $L^*(A,\psi,1)$ at $z=1$ of the twisted Hasse-Weil $L$-series $L(A,\psi,z)$, associated to $A$ and to finite dimensional complex characters $\psi$ of the absolute Galois group of $k$, and the algebraic invariants of $A$. For instance, building on a conjecture due to Deligne and Gross concerning the order of vanishing at $z=1$ of such functions one may, for a fixed character $\psi$, predict that a suitable normalisation of $L^*(A,\psi,1)$ is algebraic and generates an explicit fractional ideal inside any large enough number field. See \cite[Prop. 7.3]{rbsd} for such explicit predictions.

However, even such conjectural formulas would not themselves account for any connections that might exist between the numbers $L^*(A,\psi,1)$ as $\psi$ ranges over characters that are not necessarily in the same Galois orbit. In this direction, Mazur and Tate \cite{mt} have, in certain concrete settings and by building on the theory of modular symbols, predicted explicit congruence relations between the {\it values} $L(A,\psi,1)$ at $z=1$ of the functions $L(A,\psi,z)$. In addition, Darmon \cite{darmon} has subsequently used the theory of Heegner points to formulate analogous predictions for the values $L'(A,\psi,1)$ at $z=1$ of the {\it first derivatives} $L'(A,\psi,z)$ of the functions $L(A,\psi,z)$.

Let $F$ be a finite Galois extension of $k$. Then, in all cases, the congruence relations discussed in the previous paragraph, as $\psi$ ranges over complex irreducible characters of ${\rm Gal}(F/k)$, involve the evaluation at suitable points of $A(k)$ of the canonical ${\rm Gal}(F/k)$-valued height pairings that had been previously constructed by Mazur and Tate \cite{mtbi} by using the geometrical theory of biextensions.

In recent work \cite{rbsd} of Burns and the second author, a completely general framework for the conjectural theory of integral congruence relations between the leading terms $L^*(A,\psi,1)$ has been developed, thereby extending and refining the aforementioned conjectures of Mazur and Tate and of Darmon. This framework relies on the formulation of a completely general `refined conjecture of Birch and Swinnerton-Dyer type' (or `refined BSD conjecture' in the sequel) for $A$ and $F/k$, which is then shown to encode general families of integral congruence relations involving the Mazur-Tate pairing.

Fix $F$ as above and set $G:={\rm Gal}(F/k)$. 
We let $A_F$ denote the base change of $A$ through $F/k$ and consider
$M_F := h^1(A_F)(1)$ as a motive over $k$ with a natural action of the semisimple $\Qu$-algebra $\QG$.

It is then also shown in \cite{rbsd} that the refined BSD conjecture is equivalent to the equivariant Tamagawa number conjecture (or `eTNC' in the sequel) for the pair $(M_F, \ZG)$, as formulated by Burns and Flach \cite{bufl99}.
In addition, both of these conjectures decompose naturally into `$p$-components', one for each rational prime number $p$, and each such component is itself of interest.

For example, if $A$ has good ordinary reduction at $p$, then the compatibility results proved by Burns and Venjakob \cite{BV2} show that this $p$-component is (under certain hypotheses) a consequence of the main conjecture of non-commutative Iwasawa theory for $A$, as formulated by Coates et al. \cite{cfksv}.

Assume now that $p$ is a fixed odd prime and $F/k$ a cyclic $p$-extension of degree $p^n$ for some natural number $n$.
In this note we will continue the study of the $p$-component of the eTNC that we begun in \cite{bleymc}. We recall that the main result of loc.cit. was the computation of an equivariant regulator under certain not-too-stringent conditions. Through this computation it was then possible to 
give a reformulation of this $p$-component which both was of theoretical interest and also made it
amenable to providing partial numerical evidence.

For simplicity of the exposition, let us in the rest of this introduction assume that $A$ is an elliptic curve. The computation of the equivariant regulator in \cite{bleymc} relied on the fact that, under suitable hypotheses, a representation-theoretic result due to Yakovlev \cite{yakovlev} implies that the $p$-completion $A(F)_p:=\ZZ_p\otimes_\ZZ A(F)$ of the Mordell-Weil group of $A$ over $F$ is a permutation $\ZZ_p[G]$-module. Explicitly speaking, this fact means that one may find points
$P_{(H,j)}\in A(F^H)$, where $H$ runs over all subgroups of $G$, with the property that $\ZZ_p[G/H]\cdot P_{(H,j)}$ is a free $\ZZ_p[G/H]$-module of rank one and also that
\[
  A(F)_p = \bigoplus_{H\leq G}\bigoplus_{j=1}^{m_H}\ZZ_p[G/H]\cdot P_{(H,j)}
\]
(for some set of non-negative integers $\{m_H:H\leq G\}$).

However, our formula for the equivariant regulator involved a choice of an integral matrix $\Phi$, with entries in $\ZZ_p[G]$, which depended upon a canonical extension class in the Yoneda 2-extension group ${\rm Ext}^2_{\ZZ_p[G]}(\Hom_{\ZZ_p}(A(F)_p,\ZZ_p),A(F)_p)$ and was therefore not computable in any examples unless this group vanished. Given the above direct sum decomposition of $A(F)_p$, the vanishing of this group holds if and only if $m_H$ is equal to zero for each subgroup $H\neq 1$,
or equivalently, if and only if $A(F)_p$ is a free $\ZZ_p[G]$-module of the form $\bigoplus_{j=1}^{m_1}\ZZ_p[G]\cdot P_{(1,j)}$.

This limitation of our previous methods is consistent with those occurring in all existing verifications of $p$-components of the eTNC
for any elliptic curves. Indeed, in the settings of the theoretical verifications obtained by the first author in
\cite{bleythree}, by Burns, Wuthrich and the second author in \cite{bmw},
or of the recent extensions of these results by Burns and the second author in \cite{rbsd},
as well as of the numerical verifications carried out in \cite{bmw,bleyone, bleytwo}
and in our previous article \cite{bleymc}, a full verification of this conjecture was only ever achieved in
situations which forced the $\ZZ_p[G]$-module $A(F)_p$ to be projective.

On the other hand, even for $n=1$ (meaning that the extension $F/k$ has degree $p$), the result \cite[Thm. 9.11]{rbsd} shows that the $p$-component of the eTNC (or refined BSD conjecture) encodes a family of congruence relations between the leading terms $L^*(A,\psi,1)$ and certain `Mazur-Tate regulators' (coming from the evaluation of Mazur-Tate height pairings) which are non-trivial unless $A(F)_p$ is projective.

This observation is consistent with our previously encountered difficulties and also justifies why, from the point of view of our approach, it is only interesting to consider components of the general conjectures at primes which divide the degree of the extension.



In this note we use a result of Burns and the second author \cite[Thm. 10.3]{rbsd} to obtain an alternative computation of the equivariant regulator which is much better suited for the purpose of verifying `non-projective' instances of the refined BSD conjecture.

To be a little more precise we note that, under our hypotheses, and for any subgroup $H$ of $G$, the Mazur-Tate pairing for $A$ considered over the sub-extension $F/F^H$ of $F/k$ gives a well defined pairing
\[
\langle\,,\rangle_{F/F^H}^{\rm MT}:A(F^H)_p\otimes_{\ZZ_p}A(F^H)_p \to H\cong
I_p(H)/I_p(H)^2 .
\]
Here $I_p(H)$ denotes the augmentation ideal in the group ring $\ZZ_p[H]$ and the isomorphism maps $g\in H$ to the class of $g-1$.

Fix any choice of points as above and any generator $\sigma$ of $G$.
For any double-indices $(H,j)$ and $(J,i)$ with $H,J\neq 1$ and $H\leq J$, we let $\Psi_{(H,j),(J,i)}$ be any element of $\ZZ_p[G]$ which, in the group
$$\ZZ_p[G/H]\otimes_{\ZZ_p}I_p(H)/I_p(H)^2,$$
satisfies the equality
\begin{equation*}
  \,\,\,\,\,\,\,\,\,\,\,\,\,\,\,\,\Psi_{(H,j),(J,i)}\otimes(\sigma^{|G/H|}-1)=\sum_{\gamma\in G/H}\bigl(\gamma\otimes
  \langle P_{(J,i)},\gamma P_{(H,j)}\rangle_{F/F^H}^{\rm MT}\bigr).
\end{equation*}
For double-indices with $J<H$ we will let $\Psi_{(H,j),(J,i)}$ be any element of $\ZZ_p[G]$ which satisfies a straightforward variant of this equality. See (\ref{definingpsilower}) and Remark \ref{bksremark} below for more details.

We then show that any matrix $\Psi$ obtained in this manner (by ordering all double-indices $(H,j),(J,i)$ lexicographically) is,
independently of all of the above choices, a suitable replacement for the essentially inexplicit matrix $\Phi$ that occurred in
the computation of the equivariant regulator in \cite{bleymc}.

In the main result of this note, Theorem \ref{main}, we thus give a reformulation for the $p$-component of the refined BSD
conjecture in terms of Mazur-Tate regulators obtained from considering natural components of the matrix $\Psi$.
This is in particular a suitable extension to general $n$ of the result \cite[Thm. 9.11]{rbsd} of Burns and the second author.


As an application we are
now able to obtain the first numerical verifications of the $p$-component of the refined BSD conjecture in situations in which the $p$-completed Mordell-Weil group $A(F)_p$ is not a projective $\ZZ_p[G]$-module. We emphasise again that there exists no other theoretical or numerical verification for this conjecture in such situations.

We shall give a detailed description, which we feel may be of some independent interest, of the methods that are appropriate to the
numerical computation of Mazur-Tate pairings. We comment upon the results of our computations in
  Section \ref{computational results}. Here
  we also give a list of  pairs $(A,F/k)$ for which we have numerically verified the
  conjecture, although this list is not exhaustive. See also the webpage of the first author for more details and the
  MAGMA implementation.

\subsection{General Notation}

For a finite abelian group $\Gamma$ we set
${\rm Tr}_\Gamma:=\sum_{\gamma \in \Gamma} \gamma\in\ZZ[\Gamma]$ and also
$\widehat{\Gamma}:=\Hom_\ZZ(\Gamma,\CC^\times)$. We write $\check\psi$ for the contragrediant character of each
$\psi \in \hat{\Gamma}$ and also write
\[
e_\psi = \frac{1}{|\Gamma|} \sum_{\gamma \in \Gamma} \psi(\gamma) \gamma^{-1}
\]
for the associated idempotent.


For any abelian group $M$ we let $M_{{\rm tor}}$ denote its torsion subgroup. 
We also set $M_p := \Zp \otimes_\Ze M$.
If $M$ is finitely generated, then for a field extension $E$ of $\Qu$ we shall sometimes abbreviate $E \otimes_\Ze M$ to $E \cdot M$.
Finally, for any integer $n$ we write $M[n]$ for the subgroup of $n$-torsion points of $M$. 

For any $\Zp[\Gamma]$-Module $M$ we write 
  $M^*$ for the linear dual $\Hom_\Zp(M, \Zp)$, endowed with the natural contragredient action of $\Gamma$.
Explicitly, for a homomorphism $f$ and elements $m \in M$ and $\gamma \in \Gamma$,
one has $(\gamma f)(m) = f(\gamma^{-1} m)$. If $\Delta$ is a subgroup of $\Gamma$, we write $M_\Delta$ for the module of $\Delta$-coinvariants of $M$.

For any Galois extension of fields we abbreviate $\Gal(L/K)$ to $G_{L/K}$. We fix an algebraic closure $K^c$ of $K$ and
abbreviate $G_{K^c/K}$ to $G_K$.


If $A$ is an abelian variety defined over a number field $k$ and $L/k$ a finite extension, we write $A(L)$
for the Mordell-Weil group, $\Sha(A_L)$ for the Tate-Shafarevich group of $A$ over $L$ and $\Sha_p(A_L)$ for its $p$-primary part.

\section{Statement of the main result}

In this section we state our standing hypotheses and, after defining all the relevant objects, state the main result of this article.

\subsection{The hypotheses}\label{hyp}

Let $A$ be an abelian variety of dimension $d$, defined over a number field $k$. Let $p$ be an odd prime number and let $F/k$ be a cyclic field extension of degree $p^n$, for some natural number $n$. We write $A^t$ for the dual abelian
variety.

As in \cite[Sec.~2]{bleymc}, we will assume the validity of the following list of hypotheses.

\begin{itemize}
\item [(a)] $p \nmid |A(k)_{\rm tor}| \cdot  |A^t(k)_{\rm tor}|$;
\item [(b)]  $p$ does not divide the Tamagawa number of $A$ at any place of $k$ at which it has bad reduction;
\item [(c)] $A$ has good reduction at all $p$-adic places of $k$;
\item [(d)] $p$ is unramified in $F/\QQ$;
\item [(e)] no place of bad reduction of $A$ is ramified in $F/k$;
\item [(f)] if a place $v$ of $k$ ramifies in $F/k$ then no point of order $p$ of the reduction of $A$ is defined over the residue field of $v$;
\item [(g)] $\Sha(A_{F})$ is finite;
\item [(h)] $\Sha_p(A_{F^H})$ vanishes for all non-trivial subgroups $H$ of $G$;
\item[(i)] The group $H^1(\Gal(k(A[p^n])/k),A[p^n])$ vanishes.
\end{itemize}

\begin{remark}{\em

The hypotheses (a)-(h) recover those in place throughout \cite{bleymc}. The full list of hypotheses also recovers those that are in place in \cite[Thm. 9.9, Thm.9.11]{rbsd}.
We refer the reader to \cite[Rem.~2.1]{bleymc} or \cite[Rem.~6.1]{rbsd} for a further discussion of these hypotheses.

The hypothesis (i) recovers Hypothesis 10.1 from \cite{rbsd} in our setting and will hence allow us to apply Theorem 10.3 of loc. cit.. We recall that it is widely satisfied. For instance, it holds whenever multiplication-by-`$-1$' belongs to the image of the canonical Galois representation $G_k\to{\rm Aut}_{\mathbb{F}_p}(A[p])$. In particular, if $A$ is an elliptic curve then this hypothesis excludes only finitely many primes, by a result of Serre. Moreover, if $A$ is an elliptic curve and $k$ does not contain any $p$-th roots of unity, Lawson and Wuthrich have recently shown that hypothesis (i) is valid in all but certain exceptional cases that, in particular, all have $p\leq 11$ (see \cite[Thm. 2, \S 6]{lw}).
}\end{remark}

\begin{remark}{\em It is possible, using computations in \cite{rbsd}, to obtain a generalisation of our main result under a significantly weaker version of hypotheses (d) (that still ensures the surjectivity of the appropriate norm maps on Mordell-Weil groups).}\end{remark}


Our main result will concern an `equivariant regulator' ${\rm Reg}_{A,F/k,j}$ in $\CC_p[G]^\times/\ZZ_p[G]^\times$
for each isomorphism $j:\CC\cong\CC_p$, which we now proceed to define. Throughout the construction of this element,
we will always use $j$ to implicitly identify $\widehat G$ with $\Hom_\ZZ(G,\CC_p^\times)$.

\subsection{N\'eron-Tate regulators}

For $0\leq r\leq n$ we denote by $J_r$ the subgroup of $G$ of order $p^{n-r}$ and set $F_r:=F^{J_r}$ and $\Gamma_r:=G/J_r$.
Clearly, $[F_r:k] = |\Gamma_r| = p^r$.

Our hypotheses  allow us to apply a result of Yakovlev \cite{yakovlev} in order to restrict the Galois structure 
of the Mordell-Weil groups $A(F)_p$ and $A^t(F)_p$ as follows.
For any natural number $m$ we write $[m]$ for the set $\{1,\ldots,m\}$.

By \cite[Prop.~2.2 and (2.1)]{bleymc} we may and will fix a set of non-negative integers $\{m_{r}:0\leq r\leq n\}$ and subsets 
\begin{eqnarray*}
\calP_{(r)} &=& \{P_{(r,j)}:j\in[m_r]\} \sseq A(F_r), \\
\calP^t_{(r)}&=&\{P^t_{(r,j)}:j \in[m_r]\}\sseq  A^t(F_r)
\end{eqnarray*}
such that the $\ZZ_p[\Gamma_r]$-modules generated by each of the points $P_{(r,j)}$ and $P^t_{(r,j)}$ are free of rank one 
and there are direct sum decompositions of $\ZZ_p[G]$-modules
\begin{equation}\label{global points}
 A(F)_p = \bigoplus_{r=0}^{n}\bigoplus_{j=1}^{m_r}\ZZ_p[\Gamma_r]\cdot P_{(r,j)} \,\,
\text{ and }\,\, A^t(F)_p = \bigoplus_{r=0}^{n}\bigoplus_{j=1}^{m_r}\ZZ_p[\Gamma_r]\cdot P^t_{(r,j)}.
\end{equation}
We set
\[
\calP := \bigcup_{r=0}^n \calP_{(r)}, \quad \calP^t := \bigcup_{r=0}^n \calP^t_{(r)}.
\]

%

By ordering our fixed choice of points $\calP$ and $\calP^t$ lexicographically, we obtain a `regulator matrix'
\[
R_{A,F/k}^{\rm NT}(\calP, \calP^t):=\left( \sum_{g \in G}\langle gP^t_{(r,j)},P_{(s,i)}\rangle_{A_F}\cdot g^{-1} \right)_{(r,j),(s,i)}
\in M_N(\RR[G])
\]
with $N := \sum_{r=0}^n m_r$, and where $\langle -,-\rangle_{A_F}$ denotes the N\'eron-Tate height pairing for $A$ over $F$.


For any matrix $X=(x_{(r,j), (s,i)})$ where the indices  $\{ (r,j) : 0\leq r \leq n, j\in[m_r] \}$
are ordered lexicographically and for any $0\leq t\leq n$, we set 
\[
X_t:=(x_{(r,j),(s,i)})_{r,s\geq t}
\]
and, given any matrix $Y$ with entries in $\CC[G]$, resp. $\CC_p[G]$, and any $\psi\in\widehat G$, we write $\psi(Y)$ for the matrix with entries in $\CC$, resp. $\CC_p$, obtained after extending $\psi$ to a function on $\CC[G]$, resp. $\CC_p[G]$, by linearity and then evaluating $\psi$ at each entry of $Y$. For any $\psi\in\widehat{G}$ we define an integer $t_\psi$ between $0$ and $n$ by the equality
  $\ker(\psi) = J_{t_\psi}$ and then set
  \begin{equation}\label{def psi minor}
    \varepsilon_\psi(Y) := \det\left( \psi(Y)_{t_\psi} \right).
  \end{equation}

We now put
\[
m_{\psi}:=\sum_{r=t_\psi}^{n}(n-r)m_r
\] 
and define the $\psi$-component of the equivariant N\'eron-Tate regulator by
\begin{equation}\label{def psi reg}
{\rm Reg}^{\rm NT}_\psi(\calP, \calP^t) :=p^{-2m_{\psi}}{\cdot\varepsilon_\psi\bigl(R_{A,F/k}^{\rm NT} (\calP, \calP^t)\bigr).} 
\end{equation}
Each regulator term ${\rm Reg}^{\rm NT}_\psi(\calP, \calP^t)$ coincides with the 
element $\lambda_\psi(\calP, \calP^t)$ defined in \cite[Def.~2.4]{bleymc}.

We finally fix a generator $\sigma$ of $G$  and then define a non-zero complex number 
\begin{equation}\label{def delta}
\delta_\psi:=\prod_{r=0}^{t_\psi-1}\left(\psi(\sigma)^{p^r}-1\right)^{m_r}.
\end{equation}

\subsection{Mazur-Tate regulators}\label{mtregs}

For any $0 \le r \le n$ we recall that, under our given hypotheses, \cite[Prop. 6.3(ii)]{rbsd} implies that
every element of $A^t(F_r)_p$ and $A(F_r)_p$ is `locally-normed'. Indeed, from this result one knows that for every finite prime $w$ of $F$, the $G$-modules $\ZZ_p\otimes_\ZZ A^t(F_w)$ and $\ZZ_p\otimes_\ZZ A(F_w)$ are cohomologically-trivial and from this it follows easily that the $p$-completions of the subgroups of locally-normed elements in $A^t(F_r)$ and in $A(F_r)$ are equal to the full $p$-completions $A^t(F_r)_p$ and $A(F_r)_p$, respectively.

In particular, the construction of
Mazur and Tate using the theory of biextensions gives well defined canonical height pairings
\begin{equation}\label{mtpairing}
\left\langle\,,\right\rangle_{F/F_r}^{\rm MT}:A^t(F_r)_p\otimes_{\ZZ_p}A(F_r)_p \to J_r\cong
I_p(J_r)/I_p(J_r)^2.
\end{equation}
Here $I_p(J_r)$ denotes the augmentation ideal in the group ring $\ZZ_p[J_r]$ and the isomorphism maps $g\in J_r$ to the class of $g-1$.

In the sequel we also write $\rho_r$ for the canonical projection $\ZZ_p[G]\to\ZZ_p[\Gamma_r]$.
For any indices $0\leq r,s\leq n-1$, any $j\in[m_r]$ and any $i\in[m_s]$, we set $\ell:={\rm max}(r,s)$ and then fix any elements $\Psi_{(r,j),(s,i)}$ of $\ZZ_p[G]$ which satisfy the equality
\begin{equation}\label{definingpsilower}
  \rho_r(\Psi_{(r,j),(s,i)})\otimes(\sigma^{p^\ell}-1)=
  \sum_{\gamma\in\Gamma_r}\bigl(\gamma\otimes\langle P^t_{(s,i)},\gamma P_{(r,j)} \rangle_{F/F_\ell}^{\rm MT}\bigr)
\end{equation} in
$$\ZZ_p[\Gamma_r]\otimes_{\ZZ_p}I_p(J_\ell)/I_p(J_\ell)^2.$$
It will be clear from Proposition \ref{bocksteincomputation} below that such elements always exist.

\begin{remark}\label{bksremark}{\em Let $\calI_p(J_r)$ denote the ideal of $\ZZ_p[G]$ generated by $I_p(J_r)$. Then the inclusion $I_p(J_\ell)\subset I_p(J_r)$ induces a canonical inclusion
\begin{equation}\label{bksisom} \Zp[\Gamma_r] \otimes_\Zp I_p(J_\ell)/I_p(J_\ell)^2 \hookrightarrow \Zp[\Gamma_r] \otimes_\Zp I_p(J_r)/I_p(J_r)^2\cong\calI_p(J_r)/\calI_p(J_r)^2.\end{equation}
Here the isomorphism is canonical (see \cite[Prop. 4.9]{gm}). It is then clear that the left-hand side of the equality (\ref{definingpsilower}) coincides with the class of $\Psi_{(r,j),(s,i)}\cdot(\sigma^{p^\ell}-1)$ in the quotient $\calI_p(J_r)/\calI_p(J_r)^2$. 
}\end{remark}




Using these choices we construct a matrix
\begin{equation}\label{Psi mat}
\Psi(\calP, \calP^t):=\left(
    \begin{array}{cc|c}
      \left(\Psi_{(r,j),(s,i)} \right)_{r,s < n} & &
      \begin{array}{c}
        0\\\vdots\\0
      \end{array}\\
      \hline 
      \begin{array}{ccc} 0 & \hdots & 0 \end{array} && I_{m_{n}}
\end{array}
\right)
\end{equation} with entries in $\ZZ_p[G]$. Here $I_{m_n}$ denotes the identity $m_n\times m_n$ matrix.

For any $\psi\in\widehat G$, 
Lemma \ref{still to do} below will show that the element
\begin{equation}\label{independentsum}\sum_{\psi\in\widehat G}\varepsilon_\psi(\Psi(\calP,\calP^t))\cdot e_\psi\end{equation}
of $\CC_p[G]$ belongs to $\CC_p[G]^\times$ (so in particular each term $\varepsilon_\psi(\Psi(\calP,\calP^t))$ is non-zero) and is independent, up to multiplication by an element of $\ZZ_p[G]^\times$, of the choices made in (\ref{definingpsilower}).

\subsection{The equivariant regulator} We may now define our equivariant regulator. 
We again set $N:=\sum_{r=0}^{n}m_r$.

\begin{definition}{\em
For a fixed isomorphism $j \colon \Ce \to \Cp$, the class of
\begin{equation}\label{equivariantreg}
{\rm Reg}_{A, F/k, j} :=(-1)^{N-m_n}\cdot \sum_{\psi \in \widehat G} 
\frac{j({\rm Reg}^{\rm NT}_\psi(\calP, \calP^t) \cdot \delta_\psi)} {\varepsilon_{\psi}(\Psi(\calP,\calP^t))}\cdot e_\psi 
\end{equation}
in $\CpG^\times / \ZpG^\times$ is the ($p$-primary) equivariant regulator associated to $A$ and $F/k$.
}
\end{definition}

The claims made in Section \ref{mtregs} imply that ${\rm Reg}_{A, F/k, j}$
is independent of the choices of $\calP, \calP^t, \sigma$ and the matrix $\Psi(\calP, \calP^t)$.
By abuse of terminology we will often refer to any choice of ${\rm Reg}_{A, F/k, j}$ in $\CpG^\times$ itself as the equivariant regulator.

\begin{remark}{\em
The results of Proposition \ref{bocksteincomputation} and Lemma \ref{still to do} {below} combine to show 
that   ${\rm Reg}_{A, F/k, j}$ is precisely the element 
\[
\sum_{\psi \in \hat G} \lambda_\psi(\calP, \calP^t) \cdot \varepsilon_\psi(\Phi) \cdot \delta_\psi\cdot e_\psi \quad (\text{ mod } \ZpG^\times)
\]
which occurs in \cite[Th.~2.9]{bleymc}. The main new insight is that the elements $\varepsilon_\psi(\Phi) \in \Ce_p^\times$ which
depended upon an essentially unknown matrix $\Phi \in M_N(\ZpG)$ can be explicitly determined by (\ref{definingpsilower}).
}\end{remark}

\subsection{Statement of the result}

The refined Birch and Swinnerton-Dyer conjecture is an equality between analytic and algebraic invariants associated with
$A/k$ and $F/k$. We now briefly describe the analytic part refering the reader to \cite[Sec.~2]{bleymc} or \cite{rbsd} for
further details.

For each $\psi \in \widehat{G}$ we set
\[
  \calL_\psi^* = \calL_{A, F/k, \psi}^* :=
  \frac{L_{S_{\rm r}}^*(A, \check{\psi}, 1) \cdot \tau^*(\Qu,  \ind_k^\Qu(\psi))^d }{\Omega_A^\psi\cdot w_\psi^d} \in \Ce^\times,
\]
where
\begin{itemize}
\item[-]  $L^*_{S_r}(A, \psi, 1)$ is the leading term in the Taylor expansion at
$z=1$ of the $\psi$-twisted Hasse-Weil $L$-function $L_{S_r}(A,\psi,z)$ of $A$, truncated by removing the Euler factors corresponding to the set $S_{{\rm r}}$ of primes of $k$ which ramify in $F/k$;
\item[-] $\tau^*(\Qu, \ind_k^\Qu(\psi))$ is a suitably modified global Galois-Gauss sum;
\item[-] $\Omega_A^\psi\cdot w_\psi^d$ is a (suitably normalised) product of periods.
\end{itemize}

We finally set
\[
\calL^* = \calL^*_{A, F/k} := \sum_{\psi \in \widehat{G}} \calL^*_{A, F/k, \psi} e_\psi \in \Ce[G]^\times
\]
and note that the element $\calL^*$ defined immediately above \cite[Th.~6.5]{rbsd} specialises precisely to our definition.

Without any further mention we will always assume that the functions $L_{S_r}(A,\psi,z)$ have analytic continuation to $z=1$, so that the above term is well-defined. We also recall that Deligne and Gross have then predicted that these functions should vanish at $z=1$ exactly to order equal to the multiplicity with which the character $\psi$ occurs in the representation $\CC\otimes_\ZZ A^t(F)$ of $G$.

We now formulate the main result of this manuscript. For any $j:\CC\cong\CC_p$ we write $j_*$ for the associated map $\CC[G]^\times\to \CC_p[G]^\times$.

\begin{theorem}\label{main} Assume that the hypotheses (a)-(i) are valid.
Let $\calP$ and $\calP^t$ be any choice of points such that  (\ref{global points}) holds.  Assume also that $\Sha_p(A_F) = 0$.

Then the $p$-component of the refined Birch and Swinnerton-Dyer conjecture is valid if and only if, 
for any $j:\CC\cong\CC_p$, the element
\begin{equation}\label{mainclaim}
\frac{j_*\left( \calL^*_{A, F/k} \right)}{ {\rm Reg}_{A, F/k, j} }
\end{equation}
belongs to $\ZpG^\times$.
\end{theorem}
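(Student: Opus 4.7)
The plan is to deduce Theorem \ref{main} by combining the earlier reformulation \cite[Thm.~2.9]{bleymc} of the $p$-component of the refined BSD conjecture with the Bockstein-theoretic input \cite[Thm.~10.3]{rbsd} (which is exactly what the extra hypothesis (i) allows us to invoke). Concretely, \cite[Thm.~2.9]{bleymc} asserts, under hypotheses (a)-(h) and the vanishing of $\Sha_p(A_F)$, that the $p$-component of the refined BSD conjecture is equivalent to the element
\[
\frac{j_*(\calL^*_{A,F/k})}{\sum_{\psi\in\widehat G}j\bigl({\rm Reg}^{\rm NT}_\psi(\calP,\calP^t)\cdot\delta_\psi\bigr)\cdot\varepsilon_\psi(\Phi)\cdot e_\psi}
\]
lying in $\ZpG^\times$, where $\Phi\in M_N(\ZpG)$ is an inexplicit matrix representing the canonical class in ${\rm Ext}^2_{\ZpG}(A(F)_p^*,A(F)_p)$ arising from the Selmer complex. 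Comparing this with the definition (\ref{equivariantreg}) of ${\rm Reg}_{A,F/k,j}$, what needs to be shown is that the collections $\{\varepsilon_\psi(\Phi)\}_\psi$ and $\{\varepsilon_\psi(\Psi(\calP,\calP^t))^{-1}\}_\psi$, appropriately assembled, coincide up to a factor in $\ZpG^\times$ (and up to the overall sign $(-1)^{N-m_n}$).

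The first step is therefore to make this identification precise. The class represented by $\Phi$ may be described, via the standard dictionary between Yoneda 2-extensions and Bockstein homomorphisms in the relevant truncated Selmer complex, in terms of the connecting map
\[
\beta:A^t(F)_p\otimes_{\ZpG}A(F)_p\longrightarrow\bigoplus_{r}\calI_p(J_r)/\calI_p(J_r)^2
\]
(or its duals after restriction to the appropriate pieces of the direct sum decomposition (\ref{global points})). The substance of \cite[Thm.~10.3]{rbsd}, whose hypotheses are secured by (a)-(i), is that $\beta$ can be computed componentwise as the Mazur-Tate pairing (\ref{mtpairing}). This comparison is what Proposition \ref{bocksteincomputation} will make explicit: the entries of $\Phi$, evaluated on the chosen basis points $\calP$ and $\calP^t$, can be written in the form (\ref{definingpsilower}) (after using the canonical isomorphism on the right of (\ref{bksisom}) from \cite[Prop.~4.9]{gm}), and hence any choice of $\Psi(\calP,\calP^t)$ differs from $\Phi$ only in blocks indexed by pairs with $r=n$ or $s=n$, which are normalised to the identity block $I_{m_n}$.

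The second step is then to verify that this local comparison is strong enough to give equality of $\varepsilon_\psi$-values up to units. By Lemma \ref{still to do}, the element (\ref{independentsum}) is a unit in $\CpG$ and is independent, modulo $\ZpG^\times$, of all choices made in (\ref{definingpsilower}). Granted this, the minor determinant $\varepsilon_\psi(\Psi(\calP,\calP^t))$ differs from $\varepsilon_\psi(\Phi)$ precisely by the determinant of the identity block appearing in the lower-right corner of (\ref{Psi mat}), which accounts for the sign $(-1)^{N-m_n}$ and the normalisation by which the $r=n$ and $s=n$ rows and columns are separated out. Substituting the resulting identity $\sum_\psi\varepsilon_\psi(\Phi)\cdot e_\psi\equiv(-1)^{N-m_n}\cdot\sum_\psi\varepsilon_\psi(\Psi(\calP,\calP^t))^{-1}\cdot e_\psi\pmod{\ZpG^\times}$ into the conclusion of \cite[Thm.~2.9]{bleymc} yields exactly the statement of Theorem \ref{main}.

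The principal obstacle is the second step: translating the Bockstein-level computation of Proposition \ref{bocksteincomputation} into a statement about minors $\varepsilon_\psi$ that is valid up to $\ZpG^\times$ rather than only after extending scalars to $\Cp$. This requires carefully tracking how the lexicographic ordering interacts with the block structure induced by the subgroups $J_r$, and in particular ensuring that the augmentation relation $\rho_r(\Psi_{(r,j),(s,i)})\cdot(\sigma^{p^\ell}-1)$ in (\ref{definingpsilower}) correctly accounts for the $\delta_\psi$ factors appearing in (\ref{def delta}); this is precisely the content that Lemma \ref{still to do} is designed to encapsulate, and once it is in hand the theorem follows by direct substitution.
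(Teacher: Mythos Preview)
Your overall strategy is the same as the paper's: reduce to the reformulation involving an inexplicit matrix representing the extension class $\delta_{A,F,p}$, and then use Proposition \ref{bocksteincomputation} together with Lemma \ref{still to do} to replace that matrix by the explicit Mazur--Tate matrix $\Psi$. (The paper does not quote \cite[Thm.~2.9]{bleymc} as a black box but instead recomputes the refined Euler characteristic directly from \cite[Thm.~6.5]{rbsd}; this amounts to re-deriving that earlier result, so your shortcut is legitimate.)

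However, your second step --- the passage from $\Phi$ to $\Psi$ --- is where the real content lies, and your justification for it is incorrect. You write that ``$\varepsilon_\psi(\Psi)$ differs from $\varepsilon_\psi(\Phi)$ precisely by the determinant of the identity block appearing in the lower-right corner'', and that this ``accounts for the sign $(-1)^{N-m_n}$''. Neither claim is right: the identity block has determinant $1$, and more importantly $\Psi$ does \emph{not} agree with $\Phi$ outside the $(r=n)$- and $(s=n)$-blocks. What Proposition \ref{bocksteincomputation} actually shows is that the entries of the matrix $\Lambda$ representing $\phi^{-1}$ satisfy the defining relation (\ref{definingpsilower}) \emph{with a minus sign}; thus a valid choice of $\Psi$ has upper-left block equal to minus the upper-left block of $\Lambda$. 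The relationship you need, $\sum_\psi\varepsilon_\psi(\Phi)e_\psi\equiv(-1)^{N-m_n}\sum_\psi\varepsilon_\psi(\Psi)^{-1}e_\psi\pmod{\ZpG^\times}$, therefore involves both an inversion (because $\Lambda$ encodes $\phi^{-1}$ rather than $\phi$) and a sign (from the comparison of $\Lambda$ with $-\Lambda$), neither of which is addressed by an appeal to the identity block.

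The paper establishes this relationship not by any block-determinant bookkeeping but by the argument in the proof of Lemma \ref{still to do}: one sets $\Psi':=-\Lambda$, constructs the endomorphism $\gamma$ associated to an arbitrary $\Psi$, shows via Lemma \ref{factorsthrough} that $[\gamma]=[-\phi^{-1}]$ in the Ext-group, and then uses the general comparison result \cite[Lem.~4.7]{omac} to produce automorphisms $\kappa^1,\kappa^2$ of the free module $X$ making diagram (\ref{omaclemma}) commute. The required element of $\ZpG^\times$ is then exhibited explicitly as $\det_{\ZpG}(\kappa_1)\cdot\det_{\ZpG}(\kappa_2)^{-1}$. This is the step your proposal is missing; without it, the congruence you substitute at the end is unjustified.
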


\begin{remark}\label{acta}{\em One may rephrase the condition that the element (\ref{mainclaim}) belongs to $\ZZ_p[G]^\times$ in terms of explicit congruence relations in the augmentation filtration, as occurring in \cite[Conj. 3.11]{dmc}.
}\end{remark}

The proof of Theorem \ref{main} will occupy the next section.

\section{The proof of Theorem \ref{main}}

{Under our listed hypotheses, Theorem 6.5 and Remark 6.6 in \cite{rbsd} reformulate the $p$-component of the refined BSD conjecture (denoted by ${\rm BSD}_p(A_{F/k})$ throughout loc. cit.) as an equality of the form
\begin{equation*}\label{rbsdp}\delta_{G,p}\left(j_*\left(\calL^*_{A,F/k}\right)\right)=\chi_{G,p}\left({\rm SC}_p(A_{F/k}),h_{A,F}^j\right)\end{equation*}
in the relative algebraic $K$-group $K_0(\ZZ_p[G],\CC_p[G])$. Here $$\delta_{G,p}:\CC_p[G]^\times\to K_0(\ZZ_p[G],\CC_p[G])$$ is the canonical `extended boundary homomorphism' considered by Burns and Flach in {\cite[Sec.~4.2]{bufl99}} while 
the right-hand side 
is the refined Euler characteristic of the pair $({\rm SC}_p(A_{F/k}),h_{A,F}^j)$.
We thus also recall that ${\rm SC}_p(A_{F/k})$ is the `classical $p$-adic Selmer complex' for $A$ and $F/k$, as defined in Definition 2.3 of \cite{rbsd}, while $h_{A,F}^j$ is the canonical trivialisation of this complex that is induced by the N\'eron-Tate height (see below).


At the outset we set $C:={\rm SC}_p(A_{F/k})$ and note that, under the hypotheses of Theorem \ref{main}, it is proved in \cite[Prop. 6.3]{rbsd} that $C$ is a perfect complex of $\ZZ_p[G]$-modules. In addition, there are canonical identifications
\begin{equation}\label{bkcohomology}
  r_i:H^i(C)\cong\begin{cases}A^t(F)_p, & i=1;\\ 
                              A(F)_p^*, & i=2;\\
                              0, & i \ne 1,2.
                            \end{cases}
\end{equation}
The trivialisation $$h_{A,F}^j:\CC_p\otimes_{\ZZ_p}H^1(C)\cong\CC_p\otimes_{\ZZ_p}H^2(C)$$ is then the canonical isomorphism induced
by the N\'eron-Tate height pairing via (\ref{bkcohomology}) and $j$.

We now rephrase the validity of the $p$-component of the refined BSD conjecture as the vanishing of the element
$$\xi:=\delta_{G,p}\left(j_*\left(\calL^*_{A,F/k}\right)\right)-\chi_{G,p}(C,h_{A,F}^j)$$
of $K_0(\ZZ_p[G],\CC_p[G])$.
}
  \begin{remark}\label{K0 remark}{\em
    For an abelian group $G$ and a field $E$ with $\Qp \sseq E \sseq \Cp$ the relative algebraic
    $K$-group $K_0(\ZZ_p[G],E[G])$ is naturally
  isomorphic to $E[G]^\times / \ZpG^\times$. Moreover, these isomorphisms are induced by $\delta_{G,p}$.
 } \end{remark}

The first key step is to explicitly compute the second term that occurs in the definition of $\xi$ and to do this we  use the canonical identifications (\ref{bkcohomology}) to identify the complex $C$ with a unique element $\delta_{A,F,p} = \delta_{C,r_1,r_2}$ of the Yoneda Ext-group ${\rm Ext}^2_{\ZZ_p[G]}(A(F)_p^*,A^t(F)_p)$.

For each pair $(r,j)$ we define a dual element $P_{(r,j)}^*$ of $A(F)_p^*$ by setting,
for any pair $(s,i)$ and $\tau\in G$, 
\begin{equation}\label{def P star}
P_{(r,j)}^*(\tau P_{(s,i)}):=\begin{cases} 1, & \text{ if } r=s, j=i \text{ and } \tau\in J_r;\\
                                        0, & \text{ otherwise }.
                          \end{cases}
\end{equation}
By \cite[Lem. 4.1]{bleymc} one then has 
\[
A(F)_p^*=\bigoplus_{(r,j)}\ZZ_p[\Gamma_r] P_{(r,j)}^*
\]
with each summand isomorphic to $\ZZ_p[\Gamma_r]$.
We fix a free $\ZZ_p[G]$-module 
\[
X:=\bigoplus_{(r,j)}\ZZ_p[G] b_{(r,j)}
\] 
of rank $N=\sum_r m_r$ and consider the exact sequence
\begin{equation}\label{syzygy}
0\to A^t(F)_p\stackrel{\iota}{\to}X\stackrel{\Theta}{\to}X\stackrel{\pi}{\to}A(F)_p^*\to 0,
\end{equation}
where we set
\[
\pi(b_{(r,j)}):=P_{(r,j)}^*, \quad \Theta(b_{(r,j)}):=(\sigma^{p^r}-1)b_{(r,j)} \text{ and } \iota(P^t_{(r,j)}):=\Tr_{J_r}b_{(r,j)}.
\]

This sequence defines a canonical isomorphism
\begin{equation}\label{computeext}
\Ext^2_{\ZZ_p[G]}(A(F)_p^*,A^t(F)_p)\cong\End_{\ZZ_p[G]}(A^t(F)_p)/\iota_*(\Hom_{\ZZ_p[G]}(X,A^t(F)_p))
\end{equation}
where $\iota_*$ denotes composition with $\iota$.
Before proceeding to describe the map (\ref{computeext}) let us note that it is bijective by the general result \cite[Thm. IV.9.1]{HS}.

For a given $\phi\in\End_{\ZZ_p[G]}(A^t(F)_p)$ we consider the 
push-out commutative diagram with exact rows
\begin{equation}\label{pushout}
\begin{CD} 0 @>  >> A^t(F)_p @> \iota >> X @> \Theta >> X @> \pi >> A(F)^*_p @>  >> 0\\
@. @V \phi VV  @V VV  @\vert @\vert @. \\
0 @>  >> A^t(F)_p @>  >> X(\phi) @>  >> X @> \pi >> A(F)^*_p @>  >> 0.
\end{CD}
\end{equation}
In this diagram $X(\phi)$ is defined as the push-out of $\iota$ and $\phi$ and all the unlabeled arrows are the canonical maps 
induced by the push-out construction. Then the pre-image of $\phi$ under (\ref{computeext}) is represented 
by the bottom row of this diagram.

Then, since $C$ belongs to $D^{\rm perf}(\ZZ_p[G])$, the results of \cite[Lem. 4.2 and Lem. 4.3]{bleymc}
imply that there exists an automorphism $\phi$ of the $\ZZ_p[G]$-module $A^t(F)_p$ that represents the
image of $\delta_{A,F,p}$ under (\ref{computeext}) and also fixes the element $P^t_{(n,j)}$ for every $j$ in $[m_n]$.

It follows that the exact sequence
\begin{equation}\label{yonedarep}0\to A^t(F)_p\stackrel{\iota\circ\phi^{-1}}{\to}X\stackrel{\Theta}{\to}X\stackrel{\pi}{\to}A(F)_p^*\to 0\end{equation}
is a representative of the extension class $\delta_{A,F,p}$.

In particular, for any choice of $\CC_p[G]$-equivariant splittings
\begin{equation}\label{splitting1}
  s_1:\CC_p\cdot X\to \CC_p\cdot A^t(F)_p\oplus\CC_p\cdot\im(\Theta)
\end{equation}
and
\begin{equation}\label{splitting2}
  s_2:\CC_p\cdot X\to \CC_p\cdot A(F)^*_p\oplus\CC_p\cdot\im(\Theta)
\end{equation}
of the scalar extensions of the canonical exact sequences
$$0\to A^t(F)_p\stackrel{\iota}{\to}X\stackrel{\Theta}{\to}\im(\Theta)\to 0$$
and
$$0\to\im(\Theta)\to X\stackrel{\pi}{\to}A(F)_p^* \to 0,$$
an explicit computation of the refined Euler characteristic occurring in the definition of $\xi$ implies that
\begin{align}\label{separatingstuff}&-\chi_{G,p}(C,h_{A,F}^j)\\
=&-\delta_{G,p}({\det}_{\CC_p[G]}(s_2^{-1}\circ(h_{A,F}^j\oplus{\rm id}_{\CC_p\cdot\im(\Theta)})\circ((\CC_p\cdot\phi)\oplus{\rm id}_{\CC_p\cdot\im(\Theta)})\circ s_1))\notag\\
=&-\delta_{G,p}({\det}_{\CC_p[G]}(s_2^{-1}\circ(h_{A,F}^j\oplus{\rm id}_{\CC_p\cdot\im(\Theta)})\circ s_1\circ s^{-1}_1\circ((\CC_p\cdot\phi)\oplus{\rm id}_{\CC_p\cdot\im(\Theta)})\circ s_1))\notag\\
=&\,\delta_{G,p}({\det}_{\CC_p[G]}(s_1^{-1}\circ((h^{j}_{A,F})^{-1}\oplus{\rm id}_{\CC_p\cdot\im(\Theta)})\circ s_2))\notag\\
&\hskip 1.5truein +\delta_{G,p}({\det}_{\CC_p[G]}(s_1^{-1}\circ((\CC_p\cdot\phi^{-1})\oplus{\rm id}_{\CC_p\cdot\im(\Theta)})\circ s_1))\notag\\
=&\,\delta_{G,p}({\det}_{\CC_p[G]}(s_1^{-1}\circ((h^{j}_{A,F})^{-1}\oplus{\rm id}_{\CC_p\cdot\im(\Theta)})\circ s_2))\notag\\
&\hskip 1.5truein +\delta_{G,p}({\det}_{\QQ_p[G]}((\QQ_p\cdot\phi^{-1})\oplus{\rm id}_{\QQ_p\cdot\im(\Theta)}))\notag.\end{align}

In addition, specialising the explicit computation of \cite[Prop. 4.4]{bleymc} to the case $\Phi={\rm id}_{A^t(F)_p}$ shows that
\begin{equation}\label{theregulatorterm}
  {\det}_{\CC_p[G]}(s_1^{-1}\circ( (h^{j}_{A,F})^{-1} \oplus{\rm id}_{\CC_p\cdot\im(\Theta)})\circ s_2)=
  \left(\sum_{\psi\in\widehat{G}}j({\rm Reg}_{\psi}^{\rm NT}(\calP,\calP^t)\cdot\delta_\psi)\cdot e_\psi\right)^{-1}.
\end{equation}

To compute the second term that occurs in the final equality of (\ref{separatingstuff}) we may and will fix
any elements $\Lambda_{(r,j), (s,i)}$ of $\ZZ_p[G]$ with the property that
\begin{equation}
\label{definingphiprime}
\phi^{-1}(P^t_{(s,i)}) = \sum_{(r,j)} \Lambda_{(r,j), (s,i)} P^t_{(r,j)}.
\end{equation}
We thus obtain an invertible matrix 
\[
\Lambda = \Lambda(\calP, \calP^t):=\left( \Lambda_{(r,j), (s,i)} \right)_{(r,j), (s,i)}
\] 
with entries $\Lambda_{(r,j), (s,i)}$ in $\ZZ_p[G]$ uniquely determined modulo the kernel of the canonical projection $\rho_r:\ZpG \to \Zp[\Gamma_r]$,  which also has the form (\ref{Psi mat}).

Recall the definition of $\varepsilon_\psi(\Lambda)$ in (\ref{def psi minor}).
The chosen properties of the representative $\phi$ of $\delta_{A,F,p}$ fixed above then imply that
\begin{equation}\label{leftovertermMT}{\det}_{\QQ_p[G]}((\QQ_p\cdot\phi^{-1})\oplus{\rm id}_{\QQ_p\cdot\im(\Theta)})=
\sum_{\psi\in\widehat{G}}\varepsilon_\psi(\Lambda)\cdot e_\psi.\end{equation}

The equalities (\ref{separatingstuff}), (\ref{theregulatorterm}) and (\ref{leftovertermMT}) now combine with the definition of $\xi$ to imply that
 $\xi=\delta_{G,p}(\mathcal{L})$ with
 \[
   \mathcal{L} :=j_*(\calL^*_{A,F/k})\cdot\left( \sum_{\psi\in\widehat{G}}
     j({\rm Reg}_{\psi}^{\rm NT}(\calP,\calP^t)\cdot\delta_\psi) \cdot e_\psi\right)^{-1}\cdot
   \left( \sum_{\psi\in\widehat{G}}\varepsilon_\psi(\Lambda)\cdot e_\psi\right).
 \]

 By Remark \ref{K0 remark} it thus follows that $\xi$ vanishes if and only if $\calL$ belongs to
 $\ZZ_p[G]^\times$ and hence the proof of Theorem \ref{main}
 is completed by the proposition below.

\begin{prop}\label{bocksteincomputation}
  For any $0\leq r,s\leq n-1$ we set $\ell:={\rm max}(r,s)$. Then for any $j\in[m_r]$, any $i\in[m_s]$ and any elements
  $\Lambda_{(r,j),(s,i)}$ of $\ZZ_p[G]$ satisfying (\ref{definingphiprime}) one has
\begin{equation*}\rho_r(\Lambda_{(r,j),(s,i)})\otimes(\sigma^{p^\ell}-1)=-\sum_{\gamma\in\Gamma_r}\bigl(\gamma\otimes\langle P^t_{(s,i)},\gamma(P_{(r,j)})\rangle_{F/F_\ell}^{\rm MT}\bigr)\end{equation*} in
$\ZZ_p[\Gamma_r]\otimes_{\ZZ_p}I_p(J_\ell)/I_p(J_\ell)^2$.
\end{prop}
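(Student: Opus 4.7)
The plan is to interpret both sides of the claimed equality as two different evaluations of a single Bockstein-type homomorphism naturally attached to the extension class $\delta_{A,F,p}$. On one hand, the matrix entries $\Lambda_{(r,j),(s,i)}$ arise by construction from the explicit Yoneda representative (\ref{yonedarep}) of $\delta_{A,F,p}$; on the other hand, \cite[Thm.~10.3]{rbsd}, which is applicable in our setting thanks to hypothesis (i), identifies this same class with the data of the Mazur-Tate pairings $\langle-,-\rangle^{\rm MT}_{F/F_\ell}$ via such a Bockstein construction.

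More precisely, since the module $X$ in (\ref{yonedarep}) is a free (and hence cohomologically trivial) $\ZZ_p[G]$-module and $J_\ell$ is cyclic, the Yoneda $2$-extension (\ref{yonedarep}), read over the subgroup $J_\ell$, gives rise to a Bockstein homomorphism
\[
\beta_\ell : H^0(J_\ell, A(F)_p^*) \lra H^2(J_\ell, A^t(F)_p),
\]
whose target is canonically isomorphic to $\hat H^0(J_\ell, A^t(F)_p) \otimes_{\ZZ_p} I_p(J_\ell)/I_p(J_\ell)^2$ by the periodicity of Tate cohomology for cyclic groups. Since $\ell \geq s$, the element $P^*_{(s,i)}$ of $A(F)_p^*$ is fixed by $J_\ell$ and thus lies in the source of $\beta_\ell$.

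I will then compute $\beta_\ell(P^*_{(s,i)})$ in two independent ways. First, directly from (\ref{yonedarep}): lift $P^*_{(s,i)}$ to $b_{(s,i)} \in X$, apply $\Theta$ to obtain $(\sigma^{p^s}-1)b_{(s,i)}$, use the factorisation $\sigma^{p^\ell}-1 = c\cdot(\sigma^{p^s}-1)$ which is valid in $\ZZ_p[G]$ (since $J_\ell \subseteq J_s$), and then invert $\iota\phi^{-1}$ along the first short exact sequence refining (\ref{yonedarep}) to obtain an explicit expression involving $\phi^{-1}(P^t_{(s,i)}) = \sum_{(r',j')}\Lambda_{(r',j'),(s,i)} P^t_{(r',j')}$. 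Extracting the contribution paired with $P_{(r,j)}$ via the canonical duality and projecting to $\ZZ_p[\Gamma_r]$-coinvariants will then recover the left-hand side of the asserted equality, up to a sign. Comparing this with the second computation of $\beta_\ell(P^*_{(s,i)})$ that is provided by \cite[Thm.~10.3]{rbsd}, which identifies $\beta_\ell$ with the map induced by $\langle -,-\rangle^{\rm MT}_{F/F_\ell}$, will yield the right-hand side.

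The main obstacle will be the careful bookkeeping required to coordinate all of the ingredients: tracking the sign, which arises from the occurrence of $\phi^{-1}$ rather than $\phi$ in the representative (\ref{yonedarep}); unambiguously identifying Tate cohomology with augmentation quotients and with coinvariants (using the inclusion and isomorphism of Remark \ref{bksremark} to move between $I_p(J_\ell)$, $I_p(J_r)$ and $\calI_p(J_r)$); and handling the simultaneous interplay of the three distinct subgroups $J_r, J_s, J_\ell$, which genuinely differ whenever $r \neq s$. Once these bookkeeping issues are settled and the two evaluations of $\beta_\ell$ are matched, the proposition follows.
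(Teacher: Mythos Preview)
Your overall strategy—compute a Bockstein homomorphism attached to the representative (\ref{yonedarep}) and then invoke a theorem from \cite{rbsd} identifying it with the Mazur–Tate pairing—is exactly the paper's strategy. However, you have set up the Bockstein in the wrong direction, and this causes the explicit computation you outline to break down.

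The Bockstein used in the paper (and the one that \cite[Thm.~8.3]{rbsd} actually identifies with $-\langle\,,\rangle^{\rm MT}_{F/F_\ell}$) is
\[
\beta_{A,F/F_\ell,p}\colon A^t(F_\ell)_p=(A^t(F)_p)^{J_\ell}\longrightarrow I_p(J_\ell)/I_p(J_\ell)^2\otimes_{\ZZ_p}(A(F)_p^*)_{J_\ell},
\]
realised as the connecting homomorphism of an explicit snake-lemma diagram built from $X\xrightarrow{\Tr_{J_\ell}}X^{J_\ell}$. One therefore \emph{starts} with $P^t_{(s,i)}$, applies $\iota\circ\phi^{-1}$ (this is precisely where the defining relation (\ref{definingphiprime}) for $\Lambda$ enters), takes a $\Tr_{J_\ell}$-preimage, applies $\Theta$, and finally applies $({\rm id}\otimes\pi)_{J_\ell}$. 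Evaluating the resulting element of $I_p(J_\ell)/I_p(J_\ell)^2\otimes(A(F)_p^*)_{J_\ell}$ on $\gamma P_{(r,j)}$ immediately isolates the coefficient $a_\gamma$ in $\rho_r(\Lambda_{(r,j),(s,i)})=\sum_\gamma a_\gamma\gamma$, and the proposition follows.

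By contrast, your map $\beta_\ell\colon H^0(J_\ell,A(F)_p^*)\to H^2(J_\ell,A^t(F)_p)$ runs the other way. Starting from $P^*_{(s,i)}$, lifting to $b_{(s,i)}$ and applying $\Theta$ gives $(\sigma^{p^s}-1)b_{(s,i)}$, but this element does \emph{not} lie in $\mathrm{im}(\iota\circ\phi^{-1})=\bigoplus_{(u,h)}\Tr_{J_u}\ZZ_p[G]\,b_{(u,h)}$, so one cannot ``invert $\iota\phi^{-1}$'' on it as you propose. If instead one carries out the two connecting maps correctly for your direction (apply $(\sigma^{p^\ell}-1)$, recognise the result as $\Theta(\Tr_{J_s/J_\ell}b_{(s,i)})$, then apply $\Tr_{J_\ell}$ to obtain $\Tr_{J_s}b_{(s,i)}=\iota(P^t_{(s,i)})$), the preimage under $\iota\circ\phi^{-1}$ is $\phi(P^t_{(s,i)})$, not $\phi^{-1}(P^t_{(s,i)})$; so the entries of $\Lambda$ (which encode $\phi^{-1}$) do not appear directly. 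More fundamentally, there is no a priori reason why the element $P^*_{(s,i)}\in A(F)_p^*$ should, after a purely homological construction, produce an expression involving the specific point $P^t_{(s,i)}\in A^t(F)_p$: your sketch silently conflates these two unrelated objects. Finally, the comparison theorem in \cite{rbsd} is stated for the Bockstein in the paper's direction, so even a correct computation of your $\beta_\ell$ would require an additional duality argument before it could be matched with the Mazur–Tate pairing. Reversing the direction of your Bockstein to start from $P^t_{(s,i)}$ and following the snake-lemma recipe above repairs all of these issues at once.
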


\begin{proof}
We refer the reader to \cite[App.~B.2]{rbsd} for the construction of algebraic height pairings via Bockstein homomorphisms.
It is proved in \cite[Thm. 8.3]{rbsd} that 
\[
\langle\,,\rangle^{\rm MT}_{F/F_\ell} \colon A^t(F_\ell)_p\otimes_{\Zp} A(F_\ell)_p\to I_p(J_\ell)/I_p(J_\ell)^2 
\] 
coincides with the inverse of the pairing induced by 
\begin{align*}
\beta_{{A,F/F_\ell,p}} \colon A^t(F_\ell)_p\to I_p(J_\ell) / I_p(J_\ell)^2\otimes_{\Zp}(A(F)_p^*)_{J_\ell} \to  
I_p(J_\ell) / I_p(J_\ell)^2 \otimes_{\Zp}A(F_\ell)_p^*,
\end{align*} 
where the first arrow is the Bockstein homomorphism associated to the complex of $\ZZ_p[J_\ell]$-modules ${\rm SC}_p(A_{F/F_\ell})$ together with the canonical identifications (\ref{bkcohomology}), and the second arrow is induced by restriction to $A(F_\ell)_p$.

Now, it is immediately clear from their definitions in \cite[Def. 2.3]{rbsd} that the complexes ${\rm SC}_p(A_{F/F_\ell})$ and ${\rm SC}_p(A_{F/k})$ are canonically isomorphic in $D(\ZZ_p[J_\ell])$ and furthermore that this isomorphism is compatible with the identifications (\ref{bkcohomology}). The complex ${\rm SC}_p(A_{F/F_\ell})$ may therefore be represented by the (restriction of scalars of) the exact sequence (\ref{yonedarep}).

Now $\beta_{{A,F/F_\ell,p}}$ may be computed, through the representative (\ref{yonedarep}),
as the connecting homomorphism which arises when applying the 
snake lemma to the following commutative diagram (in which both rows and the third column are
exact and the first column is a complex)
\[
\begin{CD}
@. @. @. A^t(F_\ell)_p\\ @. @. @. @VV (\iota\circ\phi^{-1})^{J_\ell} V  \\ 0 @>
>> I_p(J_\ell)\otimes_{\ZZ_p[J_\ell]}X @> \subseteq >> X @> \Tr_{J_\ell}  >>
 X^{J_\ell}  @> >> 0\\
@. @VV {\rm id}\otimes_{\ZZ_p[J_\ell]}\Theta V @VV \Theta V @VV\Theta^{J_\ell} V\\
0 @>
>> I_p(J_\ell)\otimes_{\ZZ_p[J_\ell]}X @> \subseteq >> X @> \Tr_{J_\ell} >>
 X^{J_\ell}  @> >> 0\\
@. @VV ({\rm id}\otimes_{\ZZ_p[J_\ell]}\pi)_{J_\ell}  V \\ @.
I_p(J_\ell) / I_p(J_\ell)^2\otimes_{\Zp} (A(F)_p^*)_{J_\ell}.
\end{CD}
\]


From (\ref{definingphiprime}) and the definition of $\iota$ we immediately derive 
$$   \iota(\phi^{-1}(P^t_{(s,i)})) 
  =\iota\left(\sum_{(u,h)} \Lambda_{(u,h),(s,i)} P^t_{(u,h)}\right) 
  =\sum_{(u,h)} \Lambda_{(u,h),(s,i)} \Tr_{J_u}(b_{(u,h)}).
$$
By its definition in (\ref{def P star}) one has $P^*_{(u,h)}(\gamma P_{(r,j)})=0$ whenever the pair $(u,h)$ is different from $(r,j)$.
We thus find for any $\gamma\in\Gamma_r$ that
\begin{align*}
  -\langle P^t_{(s,i)},\gamma P_{(r,j)}\rangle_{F/F_\ell}^{\rm MT}=&\left(({\rm id}\otimes_{\ZZ_p[J_\ell]}\pi)_{J_\ell}\left(\Theta\left(\Tr_{J_r/J_\ell}(\Lambda_{(r,j),(s,i)} b_{(r,j)})\right)\right)\right)(\gamma P_{(r,j)})\\
=&\left(({\rm id}\otimes_{\ZZ_p[J_\ell]}\pi)_{J_\ell}\left((\sigma^{p^r}-1)\Tr_{J_r/J_\ell}(\Lambda_{(r,j),(s,i)} b_{(r,j)})\right)\right)(\gamma P_{(r,j)})\\
=&\left(({\rm id}\otimes_{\ZZ_p[J_\ell]}\pi)_{J_\ell}\left((\sigma^{p^\ell}-1)(\Lambda_{(r,j),(s,i)} b_{(r,j)})\right)\right)(\gamma P_{(r,j)})\\
  =&\left(\left((\sigma^{p^\ell}-1)+I_p(J_\ell)^2\right)\otimes
     (\Lambda_{(r,j),(s,i)}P^*_{(r,j)})\right)(\gamma P_{(r,j)})\\
=&(\sigma^{p^\ell}-1) ((\Lambda_{(r,j),(s,i)}P^*_{(r,j)})(\gamma P_{(r,j)}))+I_p(J_\ell)^2.
\end{align*}
Here the first equality uses the fact that
$$
\Tr_{J_r}(\Lambda_{(r,j),(s,i)} b_{(r,j)})=
\Tr_{J_\ell}(\Tr_{J_r/J_\ell}(\Lambda_{(r,j),(s,i)} b_{(r,j)})).
$$

Now, if we write
$\rho_r(\Lambda_{(r,j),(s,i)}) = \sum_{\gamma\in \Gamma_r} a_\gamma\gamma$ in $\ZZ_p[\Gamma_r]$, then
the definition (\ref{def P star}) of $P^*$ implies
\[
(\Lambda_{(r,j),(s,i)}P^*_{(r,j)})(\gamma P_{(r,j)})=a_\gamma.
\]

It therefore follows that the right-hand side of the claimed equality is equal to
$$\sum_{\gamma\in\Gamma_r}(\gamma\otimes(a_\gamma(\sigma^{p^\ell}-1)))=\sum_{\gamma\in\Gamma_r}((a_\gamma\gamma)\otimes(\sigma^{p^\ell}-1))=\rho_r(\Lambda_{(r,j),(s,i)})\otimes(\sigma^{p^\ell}-1),$$
as required.


\end{proof}

For the proof that all of our constructions are independent of any choices we made we will
  need the following general result which is straightforward to prove.

\begin{lemma}\label{ideal}
  For any indices $0\leq r\leq\ell\leq n$ and any elements $\lambda$ and $\lambda'$ of $\ZZ_p[G]$, one has that
  \[
    \rho_r(\lambda)\otimes(\sigma^{p^\ell}-1) = \rho_r(\lambda')\otimes (\sigma^{p^\ell}-1)
  \]
in $\ZZ_p[\Gamma_r]\otimes_{\ZZ_p}I_p(J_\ell)/I_p(J_\ell)^2$
if and only if $p^{\ell-r}(\lambda- \lambda')$ belongs to the ideal $$(\sigma^{p^r}-1)\cdot\ZZ_p[G]+\Tr_{J_r}\cdot\ZZ_p[G]$$ of $\ZZ_p[G]$.
\end{lemma}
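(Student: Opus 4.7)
The plan is to reduce the statement to a bookkeeping calculation in the quotient ring $\ZZ_p[\Gamma_r]/p^{n-\ell}$. Setting $\mu := \lambda - \lambda'$ and $H := J_\ell$, it suffices to prove the equivalence in the case $\lambda' = 0$.

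First I would identify $\ZZ_p[\Gamma_r]\otimes_{\ZZ_p}I_p(J_\ell)/I_p(J_\ell)^2$ with $\ZZ_p[\Gamma_r]/p^{n-\ell}\ZZ_p[\Gamma_r]$. Because $J_\ell$ is cyclic of order $p^{n-\ell}$ and generated by $\sigma^{p^\ell}$, the standard isomorphism $I_p(J_\ell)/I_p(J_\ell)^2\cong J_\ell$ sending $g-1\mapsto g$ shows that $I_p(J_\ell)/I_p(J_\ell)^2$ is a cyclic $\ZZ_p$-module of order $p^{n-\ell}$ generated by the class of $\sigma^{p^\ell}-1$. Under this identification, the condition
\[
\rho_r(\mu)\otimes(\sigma^{p^\ell}-1)=0
\]
is equivalent to the condition $\rho_r(\mu)\in p^{n-\ell}\ZZ_p[\Gamma_r]$.

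Next I would analyse the right-hand side. Since $\ker(\rho_r)=(\sigma^{p^r}-1)\cdot\ZZ_p[G]$ and $\rho_r(\mathrm{Tr}_{J_r})=|J_r|=p^{n-r}$, applying $\rho_r$ to the supposed relation $p^{\ell-r}\mu\in(\sigma^{p^r}-1)\ZZ_p[G]+\mathrm{Tr}_{J_r}\ZZ_p[G]$ yields $p^{\ell-r}\rho_r(\mu)\in p^{n-r}\ZZ_p[\Gamma_r]$. Because $\ZZ_p[\Gamma_r]$ is $\ZZ_p$-free, hence $p$-torsion-free, one may divide by $p^{\ell-r}$ to conclude $\rho_r(\mu)\in p^{n-\ell}\ZZ_p[\Gamma_r]$. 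This gives the forward direction.

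For the converse, suppose $\rho_r(\mu)=p^{n-\ell}\alpha$ for some $\alpha\in\ZZ_p[\Gamma_r]$, and lift $\alpha$ to some $\widetilde\alpha\in\ZZ_p[G]$. Then $\mu-p^{n-\ell}\widetilde\alpha\in\ker(\rho_r)=(\sigma^{p^r}-1)\ZZ_p[G]$, so multiplying by $p^{\ell-r}$ gives
\[
p^{\ell-r}\mu\in p^{n-r}\widetilde\alpha+(\sigma^{p^r}-1)\ZZ_p[G].
\]
Finally, the observation that $\rho_r(\mathrm{Tr}_{J_r})=p^{n-r}$ already made above shows that $p^{n-r}\cdot\ZZ_p[G]\subseteq\mathrm{Tr}_{J_r}\ZZ_p[G]+(\sigma^{p^r}-1)\ZZ_p[G]$, and substituting into the previous display yields the desired containment. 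The main obstacle is really only one of bookkeeping: keeping straight the indices $r,\ell$ and the orders $p^{n-r},p^{n-\ell}$ of the various cyclic quotients involved, and noticing that torsion-freeness of $\ZZ_p[\Gamma_r]$ justifies the division by $p^{\ell-r}$.
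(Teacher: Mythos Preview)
Your argument is correct. The paper does not prove this lemma at all, merely remarking that it ``is straightforward to prove,'' and your reduction to the condition $\rho_r(\mu)\in p^{n-\ell}\ZZ_p[\Gamma_r]$ via the identification $I_p(J_\ell)/I_p(J_\ell)^2\cong\ZZ/p^{n-\ell}\ZZ$, together with the observations $\ker(\rho_r)=(\sigma^{p^r}-1)\ZZ_p[G]$ and $\Tr_{J_r}\equiv p^{n-r}\pmod{\ker(\rho_r)}$, is exactly the kind of direct verification the authors had in mind.
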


We finally justify that all assertions in the statement of our main result were indeed well-defined.

\begin{lemma}\label{still to do}
  For any elements $\Psi_{(r,j),(s,i)}$ of $\ZZ_p[G]$ satisfying the equalities (\ref{definingpsilower}), the sum (\ref{independentsum}) belongs to $\CC_p[G]^\times$ and is independent,
  up to multiplication by an element of $\ZZ_p[G]^\times$, of the choices made.
\end{lemma}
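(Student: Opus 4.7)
The plan is to leverage Proposition \ref{bocksteincomputation} in order to produce a canonical reference matrix and to compare arbitrary admissible choices of $\Psi_{(r,j),(s,i)}$ against it. First I would observe that, if $\Lambda=\Lambda(\calP,\calP^t)$ is the matrix associated via (\ref{definingphiprime}) to the automorphism $\phi$ fixed in the proof of Theorem \ref{main}, then Proposition \ref{bocksteincomputation} itself states that the entries $-\Lambda_{(r,j),(s,i)}$ satisfy the defining equality (\ref{definingpsilower}); they therefore supply one admissible choice. The associated matrix $\Psi(\calP,\calP^t)$ built from this choice coincides with $\Lambda(\calP,\calP^t)$ apart from the negation of each entry of the upper-left $(N-m_n)\times(N-m_n)$ block, so that for every $\psi\in\widehat{G}$ one has $\varepsilon_\psi(\Psi)=(-1)^{\sum_{r=t_\psi}^{n-1}m_r}\varepsilon_\psi(\Lambda)$.

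Next, the equality (\ref{leftovertermMT}) identifies $\sum_\psi\varepsilon_\psi(\Lambda)e_\psi$ with the reduced determinant over $\QQ_p[G]$ of the $\QQ_p[G]$-automorphism $(\QQ_p\cdot\phi^{-1})\oplus\id_{\QQ_p\cdot\im(\Theta)}$, and so places it in $\QQ_p[G]^\times$. In particular each $\varepsilon_\psi(\Lambda)$, and hence each $\varepsilon_\psi(\Psi)$ for this specific choice, is non-zero; this already places the sum in (\ref{independentsum}) in $\CC_p[G]^\times$.

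For the independence claim, let $\Psi$ and $\Psi'$ be any two admissible choices and set $M:=\Psi-\Psi'$. Subtracting the two instances of (\ref{definingpsilower}) gives $\rho_r(M_{(r,j),(s,i)})\otimes(\sigma^{p^\ell}-1)=0$, so by Lemma \ref{ideal} we have $p^{\ell-r}M_{(r,j),(s,i)}\in(\sigma^{p^r}-1)\ZZ_p[G]+\Tr_{J_r}\ZZ_p[G]$ for every pair of indices with $r,s<n$. The strategy is then to assign to each admissible $\Psi$ a $\ZZ_p[G]$-module endomorphism $\phi_\Psi$ of $A^t(F)_p$ defined by $\phi_\Psi(P^t_{(s,i)}):=-\sum_{(r,j)}\Psi_{(r,j),(s,i)}P^t_{(r,j)}$ for $s<n$ together with $\phi_\Psi(P^t_{(n,i)}):=P^t_{(n,i)}$, and then to argue that $\phi_\Psi$ is in fact a $\ZZ_p[G]$-automorphism whose class in (\ref{computeext}) equals $\delta_{A,F,p}$. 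Granted this, two such endomorphisms $\phi_\Psi$ and $\phi_{\Psi'}$ represent the same extension class and so differ by an element of $\iota_*(\Hom_{\ZZ_p[G]}(X,A^t(F)_p))$, hence the composite $\phi_{\Psi'}\circ\phi_\Psi^{-1}$ is an automorphism of the $\ZZ_p[G]$-module $A^t(F)_p$ whose reduced determinant therefore lies in $\ZZ_p[G]^\times$. An application of (\ref{leftovertermMT}) to $\phi_\Psi$ and $\phi_{\Psi'}$ separately, combined with the fact that the sign $(-1)^{\sum_{r=t_\psi}^{n-1}m_r}$ cancels in the ratio, then yields the desired
\[
\sum_{\psi\in\widehat G}\frac{\varepsilon_\psi(\Psi)}{\varepsilon_\psi(\Psi')}\,e_\psi\in\ZZ_p[G]^\times.
\]

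The principal obstacle is the well-definedness of $\phi_\Psi$. Concretely one must check both that the right-hand side of the defining formula is insensitive to the indeterminacy in each $\Psi_{(r,j),(s,i)}$ modulo $\ker(\rho_r)$, which is automatic since $P^t_{(r,j)}$ is annihilated by this kernel, and the more subtle requirement that $-\sum_{(r,j)}\Psi_{(r,j),(s,i)}P^t_{(r,j)}$ be $J_s$-invariant as befits the image of the $J_s$-fixed point $P^t_{(s,i)}$ under a $G$-equivariant endomorphism. The latter compatibility should be extracted from the defining equality (\ref{definingpsilower}), combined with the canonical identification recorded in (\ref{bksisom}) and the vanishing of $I_p(J_r)$ on $P^t_{(r,j)}$. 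Once these well-definedness issues are resolved, the automorphism property of $\phi_\Psi$ and the claim that it represents $\delta_{A,F,p}$ will follow by running the computation in Proposition \ref{bocksteincomputation} in reverse.
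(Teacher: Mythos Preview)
Your overall architecture matches the paper's: use Proposition~\ref{bocksteincomputation} to exhibit $-\Lambda$ as one admissible choice, deduce invertibility from (\ref{leftovertermMT}), and then compare an arbitrary admissible $\Psi$ against this reference. The gap is in the final step.

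You write that $\phi_{\Psi'}\circ\phi_\Psi^{-1}$ ``is an automorphism of the $\ZZ_p[G]$-module $A^t(F)_p$ whose reduced determinant therefore lies in $\ZZ_p[G]^\times$''. But $A^t(F)_p$ is in general \emph{not} a projective $\ZZ_p[G]$-module---indeed this is the entire point of the paper---so a $\ZZ_p[G]$-automorphism of it need not have $\QQ_p[G]$-determinant lying in $\ZZ_p[G]^\times$. Equivalently, the automorphism $(\QQ_p\cdot\phi_\Psi\phi_{\Psi'}^{-1})\oplus\id_{\QQ_p\cdot\im(\Theta)}$ of $\QQ_p\cdot X$ that you obtain via (\ref{leftovertermMT}) and the splitting $s_1$ does \emph{not} arise from a $\ZZ_p[G]$-automorphism of $X$, because $s_1$ is only $\QQ_p[G]$-linear. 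The paper addresses exactly this point: once one knows that $\gamma$ and $-\phi^{-1}$ represent the same class in $\Ext^2$, it invokes \cite[Lem.~4.7]{omac} to produce genuine $\ZZ_p[G]$-automorphisms $\kappa^1,\kappa^2$ of the \emph{free} module $X$ making the diagram (\ref{omaclemma}) commute, and then identifies the ratio $\sum_\psi(\varepsilon_\psi(\Psi)/\varepsilon_\psi(\Psi'))e_\psi$ with $\det_{\ZZ_p[G]}(\kappa^1)\det_{\ZZ_p[G]}(\kappa^2)^{-1}$. This transfer to the free module is the missing ingredient in your argument.

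There is a second, smaller gap. You correctly flag the well-definedness of $\phi_\Psi$ as the ``principal obstacle'' but do not resolve it; saying it ``should be extracted'' from (\ref{definingpsilower}) and (\ref{bksisom}) is not enough. The paper's Lemma~\ref{factorsthrough} carries this out in detail: using Lemma~\ref{ideal} one writes the difference $\Psi_{(r,j),(s,i)}-\Psi'_{(r,j),(s,i)}$ explicitly, and the crucial step is to show that for $r\geq s$ the element $\Tr_{J_s/J_r}$ necessarily divides the relevant coefficient $\mu_{(r,j),(s,i)}$. This divisibility is what guarantees $J_s$-invariance of the image of $P^t_{(s,i)}$ and hence that the formula defines a $\ZZ_p[G]$-endomorphism; it also underlies the factorisation $\psi=\alpha\circ\iota$ through the free module $X$, which is what makes the difference vanish in $\Ext^2$. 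Your proposal to ``run Proposition~\ref{bocksteincomputation} in reverse'' does not by itself supply this divisibility.
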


\begin{proof}

By Proposition \ref{bocksteincomputation}, any collection of elements $-\Lambda_{(r,j),(s,i)}$ (for $r,s<n$) satisfying (\ref{definingphiprime}) also constitutes an appropriate choice satisfying the equalities (\ref{definingpsilower}). In addition, the sum
$$\sum_{\psi\in\widehat G}\varepsilon_\psi(-\Lambda)\cdot e_\psi$$
clearly belongs to $\CC_p[G]^\times$. (In fact, by the argument of \cite[Lem. 4.8]{bleymc}, it also belongs to $\mathcal{M}^\times$, where $\mathcal{M}$ denotes the (unique) maximal $\Zp$-order in $\QQ_p[G]$.)

It is therefore enough to set
$\Psi':=-\Lambda$, fix any collection of elements $\Psi_{(r,j),(s,i)}$ satisfying the equalities (\ref{definingpsilower}), and prove that the sum 
$$\sum_{\psi\in\widehat G}\frac{\varepsilon_\psi(\Psi)}{\varepsilon_\psi(\Psi')}\cdot e_\psi$$ belongs to $\ZZ_p[G]^\times$.


The main step involved in proving this assertion is given by the following intermediate result.

\begin{lemma}\label{factorsthrough} The endomorphism $\psi$ of $A^t(F)_p$ which maps a point $P^t_{(s,i)}$ to the sum
$$\sum_{(r,j)}(\Psi_{(r,j),(s,i)}-\Psi'_{(r,j),(s,i)})\cdot P^t_{(r,j)}$$ factors through the map $\iota:A^t(F)_p\to X$ occurring in the exact sequence (\ref{syzygy}).

In addition, if we define an endomorphism $\gamma$ of $A^t(F)_p$ by setting
$$\gamma(P^t_{(s,i)}):=\sum_{(r,j)}\Psi_{(r,j), (s,i)} P^t_{(r,j)},$$
then $\gamma$ is bijective.
\end{lemma}

\begin{proof}
  By Lemma \ref{ideal}, for each pair $(r,j),(s,i)$ with $r,s<n$, there exist elements $\lambda_{(r,j),(s,i)}$
  and $\mu_{(r,j),(s,i)}$ in $\ZZ_p[G]$ with the property that
\begin{equation}\label{difference}
    \Psi_{(r,j),(s,i)}-\Psi'_{(r,j),(s,i)}=\begin{cases}(\sigma^{p^r}-1)\cdot\lambda_{(r,j),(s,i)}+\Tr_{J_r}\cdot\mu_{(r,j),(s,i)},\,\,\,\,\,\,\,\,\,\,\,\,\,\,\,\,\,\,\,\,\,\,\,\text{ if }r\geq s;\\
\bigl((\sigma^{p^r}-1)\cdot\lambda_{(r,j),(s,i)}+\Tr_{J_r}\cdot\mu_{(r,j),(s,i)}\bigr)p^{r-s},\,\,\,\,\,\,\text{ if }r< s.
\end{cases}
\end{equation}
For $r=n$ or $s=n$ we may simply take
$\lambda_{(r,j),(s,i)}$ and $\mu_{(r,j),(s,i)}$ to be equal to 0 and still have these equalities.

Now since $\sigma^{p^r}$ acts trivially on $P^t_{(r,j)}$ one thus has
\begin{align}\label{headache}\psi(P^t_{(s,i)})=&\sum_{r\geq s}\Tr_{J_r}\cdot\mu_{(r,j),(s,i)}\cdot P^t_{(r,j)}+\sum_{r<s}p^{r-s}\Tr_{J_r}\cdot\mu_{(r,j),(s,i)}\cdot P^t_{(r,j)}\notag\\
=&\sum_{r\geq s}\Tr_{J_r}\cdot\mu_{(r,j),(s,i)}\cdot P^t_{(r,j)}+\sum_{r<s}\Tr_{J_s}\cdot\mu_{(r,j),(s,i)}\cdot P^t_{(r,j)}.\end{align}
In addition, $J_s$ acts trivially both on $P^t_{(s,i)}$ and on the second summand of the above expression and therefore it must also act trivially on each term $\Tr_{J_r}\cdot\mu_{(r,j),(s,i)}\cdot P^t_{(r,j)}$ with $r\geq s$.
Now, since $\ZZ_p[G]\cdot P^t_{(r,j)}=\ZZ_p[G/J_r]\cdot P^t_{(r,j)}$ is a free $\ZZ_p[G/J_r]$-module of rank one, this condition necessarily implies that $\Tr_{J_s/J_r}$ divides $\mu_{(r,j),(s,i)}$. We may therefore, for every indices with $r\geq s$, write $\mu_{(r,j),(s,i)}=\Tr_{J_s/J_r}\cdot\tilde{\mu}_{(r,j),(s,i)}$ for some element $\tilde{\mu}_{(r,j),(s,i)}$ of $\ZZ_p[G]$.

We now define a homomorphism $\alpha:X\to A^t(F)_p$ by setting
$$\alpha(b_{(s,i)}):=\sum_{r\geq s}\tilde{\mu}_{(r,j),(s,i)}\cdot P^t_{(r,j)}+\sum_{r<s}\mu_{(r,j),(s,i)}\cdot P^t_{(r,j)}$$
and claim that $\psi=\alpha\circ\iota$, as required to prove the first claim of the lemma.

Indeed, one has
\begin{align*}(\alpha\circ\iota)(P_{(s,i)})=&\alpha(\Tr_{J_s}\cdot P_{(s,i)})=\Tr_{J_s}\cdot\alpha(P_{(s,i)})\\
=&\sum_{r\geq s}\Tr_{J_s}\cdot\tilde{\mu}_{(r,j),(s,i)}\cdot P^t_{(r,j)}+\sum_{r<s}\Tr_{J_s}\cdot\mu_{(r,j),(s,i)}\cdot P^t_{(r,j)}\\
=&\sum_{r\geq s}\Tr_{J_r}\cdot\mu_{(r,j),(s,i)}\cdot P^t_{(r,j)}+\sum_{r<s}\Tr_{J_s}\cdot\mu_{(r,j),(s,i)}\cdot P^t_{(r,j)},\end{align*}
which is equal to $\psi(P_{(s,i)})$ by (\ref{headache}).

To prove the second claim we write $\epsilon:\ZZ_p[G]\to\ZZ_p$ for the canonical augmentation map and define a canonical ring homomorphism $\epsilon_{\F_p}$ as the composition
$$\ZZ_p[G]\stackrel{\epsilon}{\to}\ZZ_p\to\F_p.$$

Now, the equalities (\ref{difference}) imply that
$$\epsilon(\Psi'_{(r,j),(s,i)})=\begin{cases}\epsilon(\Psi_{(r,j),(s,i)})-p^{n-r}\epsilon(\mu_{(r,j),(s,i)}),\,\,\,\,\,\,\text{ if }r\geq s;\\
\epsilon(\Psi_{(r,j),(s,i)})-p^{n-s}\epsilon(\mu_{(r,j),(s,i)}),\,\,\,\,\,\,\text{ if }r< s,\end{cases}$$
and hence also that if $r,s<n$ then $\epsilon_{\F_p}(\Psi'_{(r,j),(s,i)})=\epsilon_{\F_p}(\Psi_{(r,j),(s,i)})$ (this equality is trivial for $r=n$ or $s=n$).

The ring $\ZZ_p[G]$ is local with maximal ideal equal to $\ker(\epsilon_{\F_p})$. In particular an element $x$ of $\ZZ_p[G]$ is a unit if and only if $\epsilon_{\F_p}(x)\neq 0$.

One then knows that $\epsilon_{\F_p}(\det(\Psi))=\det(\epsilon_{\F_p}(\Psi))=\det(\epsilon_{\F_p}(\Psi'))=\epsilon_{\F_p}(\det(\Psi'))\neq 0$. 
It thus follows that $\det(\Psi)\in\ZZ_p[G]^\times$ and therefore that $\gamma$ is bijective, as required.
\end{proof}

Now, if we denote by $[f]$ the pre-image in ${\rm Ext}^2_{\ZZ_p[G]}(A(F)_p^*,A^t(F)_p)$ under the isomorphism (\ref{computeext}) of the class of an endomorphism $f$ of $A^t(F)_p$,
then Lemma \ref{factorsthrough} implies that
$[\gamma]-[-\phi^{-1}]=[\psi]=0$ and hence also that
$[\gamma]=[-\phi^{-1}]$.

In addition, since both $\gamma$ and $-\phi^{-1}$ are bijective, this class in ${\rm Ext}^2_{\ZZ_p[G]}(A(F)_p^*,A^t(F)_p)$ may be represented by both of the exact sequences obtained by replacing $\phi^{-1}$ by $\gamma^{-1}$ or by $-\phi$ in the exact sequence (\ref{yonedarep}).

By the general result \cite[Lem. 4.7]{omac} there exist automorphisms $\kappa^1$ and $\kappa^2$ of $X$ with the property that the (exact) diagram
\begin{equation}\label{omaclemma}
\begin{CD} 0 @>  >> A^t(F)_p @> \iota\circ\gamma^{-1} >> X @> \Theta >> X @> \pi >> A(F)^*_p @>  >> 0\\
@. @\vert @V \kappa^1 VV  @V \kappa^2 VV   @\vert @. \\
0 @>  >> A^t(F)_p @> \iota\circ(-\phi) >> X @> \Theta >> X @> \pi >> A(F)^*_p @>  >> 0
\end{CD}
\end{equation}
is commutative.

Now, one may compute (for instance, by the argument of \cite[Prop. 4.4]{bleymc}) the desired sum as
$$\sum_{\psi\in\widehat G}\frac{\varepsilon_\psi(\Psi)}{\varepsilon_\psi(\Psi')}\cdot e_\psi=\frac{{\rm det}_{\QQ_p[G]}(\langle\gamma,\Theta,s_1,s_2\rangle)}{{\rm det}_{\QQ_p[G]}(\langle-\phi^{-1},\Theta,s_1,s_2\rangle)}$$
where, for any ($\QQ_p[G]$-equivariant) splittings $s_1$ and $s_2$ as in (\ref{splitting1}) and (\ref{splitting2}), and any endomorphism $\beta$ of $A^t(F)_p$, we have written $\langle\beta,\Theta,s_1,s_2\rangle$ for the composite isomorphism
\begin{multline*}\QQ_p\cdot X\stackrel{s_1}{\to}\QQ_p\cdot A^t(F)_p\oplus\QQ_p\cdot\im(\Theta)\stackrel{\beta\oplus{\rm id}}{\to}\QQ_p\cdot A^t(F)_p\oplus\QQ_p\cdot\im(\Theta)\\ \to\QQ_p\cdot A(F)_p^*\oplus\QQ_p\cdot\im(\Theta)\stackrel{s_2^{-1}}{\to}\QQ_p\cdot X,\end{multline*}
with the unlabeled arrow simply mapping a point $P^t_{(s,i)}$ to $P^*_{(s,i)}$.

It is then straightforward to deduce from the commutativity of the diagram (\ref{omaclemma}) that 
$$\sum_{\psi\in\widehat G}\frac{\varepsilon_\psi(\Psi)}{\varepsilon_\psi(\Psi')}\cdot e_\psi={\rm det}_{\ZZ_p[G]}(\kappa_1)\cdot{\rm det}_{\ZZ_p[G]}(\kappa_2)^{-1}$$
and, since both of the determinants on the right hand side are by construction elements of $\ZZ_p[G]^\times$, this concludes the proof of Lemma \ref{still to do}.
\end{proof}


\section{Computation of the Mazur-Tate pairing}\label{comp mtp}

In this section we explain how one may numerically compute the Mazur-Tate pairing
(\ref{mtpairing}). The computation can be reduced to the computation of local Tate duality pairings which, in turn, may in simple situations be
computed by the evaluation of Hilbert symbols thanks to recent results of Fisher and Newton \cite{FisherNewton} or of Visse \cite{Visse}.

Our approach is based crucially on the ability to compute certain generalised Selmer groups,
for whose calculation we will apply a  method of Schaefer and Stoll \cite{SS03}. In this regard we also wish to mention subsequent work of
  Cremona, Fisher, O'Neil, Simon and Stoll \cite{Cremona_et_al_I,Cremona_et_al_II,  Cremona_et_al_III}
  where they develop algorithms for computing $n$-Selmer groups by representing their elements as curves of
degree $n$ in $\mathbb{P}^{n-1}$. However, we have not so far required using their methods.

\subsection{The general strategy}\label{strategy}

We continue to assume the hypotheses of Section \ref{hyp}.
In particular,as explained in Section \ref{mtregs}, the result \cite[Prop. 6.3(ii)]{rbsd} implies that every element of $A^t(k)_p$ and $A(k)_p$ is `locally-normed'.
Under this condition, Bertolini and Darmon have defined in \cite[\S 3.4.1]{bert} and \cite[\S 2.2]{bert2} a pairing
\[
\langle\ , \ \rangle_1 \colon A^t(k)_p \otimes_{\ZZ_p} A(k)_p \lra G \simeq \IpG / \IpG^2.
\] 
Although the definition of this pairing is only given in the case that $A$ is an elliptic curve, 
it extends
naturally to our more general setting (see also  \cite[\S 10]{rbsd}). 


The results of Bertolini and Darmon in \cite[Thm.~2.8 and Rem.~2.10]{bert2} and of Tan in \cite[Prop.~3.1]{kst} combine to directly show
that the pairing $ \langle\ , \ \rangle_1$ coincides with the 
Mazur-Tate pairing $\langle\ , \ \rangle_{F/k}^{\rm MT}$. 
We are therefore left with the task to describe the explicit computation of $\langle\ , \ \rangle_1$.

Let $B$ be either $A$ or its dual $A^t$. For a finite set $S$ of non-archimedean places of $k$ we define the
generalised Selmer group
\[
\Sel_S^{(p^n)}(B/F) \le H^1(F, B[p^n])
\]
to be the kernel of the localisation map
\[
H^1(F, B[p^n]) \lra \prod_{w \not\in S(F)} H^1(F_w, B),
\]
with the product running over all non-archimedean places of $F$ that do not belong to the set $S(F)$ of places that lie above a place in $S$. We recall that this group is also often referred to as a `relaxed Selmer group'.

By Kummer theory we then have
\begin{eqnarray*}
&& \Sel_S^{(p^n)}(B/F)  \\
&=& \{ \xi \in H^1(F, B[p^n]) \mid \res_w(\xi) \in \delta_w( B(F_w) / p^nB(F_w)) \text{ for all } w \not\in S(F) \}.
\end{eqnarray*}
Here $\res_w:H^1(F, B[p^n])\to H^1(F_w,B[p^n])$ denotes the canonical localisation map and
$\delta_w:B(F_w) / p^nB(F_w)\to H^1(F_w,B[p^n])$ is the canonical Kummer map.
In particular, when $S$ is taken to be the empty set, one recovers the usual Selmer group $\Sel^{(p^n)}(B/F)$ associated with multiplication by $p^n$.

In the following we will employ the notation from \cite[Sec.~10.2.1]{rbsd}.
We set $Z := \Ze/p^n\Ze$ and  $R := Z [G]$ and define additional $R$-modules
\[
B_S(\mathbb{A}_F) / p^n := \prod_{w \in S(F)} B(F_w)/p^n B(F_w)
\]
and
\[
H_S^1( \mathbb{A}_F, B[p^n]) := \prod_{w \in S(F)} H^1(F_w, B[p^n]) .
\]

{In order to define $\langle\ , \ \rangle_1$, we first recall the construction of a canonical (local duality) perfect pairing 
\begin{equation}\label{Rpairing}
\langle\ , \ \rangle \colon H_S^1( \mathbb{A}_F, A^t[p^n]) \times H_S^1( \mathbb{A}_F, A[p^n]) \lra R.
\end{equation}

If $A$ is defined over an $\ell$-adic field $L$ (for some prime $\ell$), then we write $\langle \ , \  \rangle_{L, p^n}$ for
the local Tate duality pairing obtained by combining the cup product, the Weil pairing and the invariant map as follows:
\begin{eqnarray}\label{local Tate pairing}
   H^1(L, A^t[p^n]) \times H^1(L, A[p^n]) &\stackrel{\cup} \lra& H^2(L, A^t[p^n]  \otimes_\Zp A[p^n]) \\
\notag &\lra& H^2(L, \mu_{p^n}) \\
\notag &\stackrel{\rm{inv_L}}\lra& \Qp/\Zp.
\end{eqnarray}
Then, for $x=(x_w)_{w \in S(F)} \in H_S^1( \mathbb{A}_F, A^t[p^n])$ and $y=(y_w)_{w\in S(F)} \in H_S^1( \mathbb{A}_F, A[p^n])$ we set
\[
\langle x, y \rangle_{S} := \sum_{w \in S(F)} \langle x_w, y_w \rangle_{F_w, p^n}.
\]
All values $\langle x, y \rangle_{S}$ in fact belong to $\frac{1}{p^n} \Zp/\Zp$, which we henceforth identify with
$Z = \Ze / p^n\Ze$.
We may thus define the pairing (\ref{Rpairing}) by the explicit formula
\begin{equation}\label{equiv BD pairing}
\langle x, y \rangle := \sum_{g \in G} \langle x^g, y \rangle_{S} g^{-1}.
\end{equation}
}

We now recall the explicit definition of $\langle P, Q \rangle_1$ for $P \in A^t(k)$ and $Q \in A(k)$. Let $\Sigma$ be an admissible
set of primes as in \cite[Def.~2.22]{bert}, \cite[Def.~1.5]{bert2} or \cite[Lem.~10.5]{rbsd}.
A crucial consequence of the definition of admissibility is that the canonical (diagonal) localisation map
  \begin{equation}\label{injectivity}
    A(F)/p^nA(F) \lra A_\Sigma(\mathbb{A}_F) / p^n
  \end{equation}
is injective.


We write $\delta$ for the canonical global Kummer map and let
$\tilde x \in \Sel_\Sigma^{(p^n)}(A^t/F)^G$ denote the image of $P$ under the canonical composition
\[
A^t(k)  \lra (A^t(F)/p^nA^t(F))^G \stackrel{\delta^G}{\lra} \Sel_\Sigma^{(p^n)}(A^t/F)^G.
\]
We also let $\tilde y \in (A_\Sigma(\mathbb{A}_F) / p^n)^G $ be the image of $Q$ under
the canonical (diagonal) localisation map
\[
A(k) \lra (A(F)/p^nA(F))^G\lra (A_\Sigma(\mathbb{A}_F) / p^n)^G.
\]

By \cite[\S 3.1]{bert} the $R$-modules $\Sel_\Sigma^{(p^n)}(A^t/F)$ and $A_\Sigma(\mathbb{A}_F) / p^n$
are $G$-cohomologically trivial. We therefore find
elements $x \in \Sel_\Sigma^{(p^n)}(A^t/F)$ and $y=(y_w)_{w \in \Sigma(F)} \in A_\Sigma(\mathbb{A}_F) / p^n$ such that
\[
\Tr_G(x) = \tilde x, \quad \Tr_G(y) = \tilde y.
\]

We next consider the canonical (diagonal) localisation map
\[
  \lambda_\Sigma \colon \Sel_\Sigma^{(p^n)}(A^t/F) \sseq H^1(F, A^t[p^n]) \stackrel{\oplus \res_w} \lra
 H_\Sigma^1( \mathbb{A}_F, A^t[p^n])
\]
and the product of local Kummer maps
\[
  \delta_\Sigma \colon A_\Sigma(\mathbb{A}_F) / p^n \stackrel{\oplus \delta_w}\lra
  H_\Sigma^1( \mathbb{A}_F, A[p^n]).
\]


Then, by the definition of $\langle \ , \ \rangle_1$  given in \cite[\S 2.2]{bert2}, we obtain
\[
\langle P, Q \rangle_1 \equiv \langle \lambda_\Sigma(x), \delta_\Sigma(y) \rangle \pmod{I_p(G)^2},
\]
with $\langle\ , \ \rangle$ as in (\ref{Rpairing}).
Noting the sign involved in the definition (\ref{equivariantreg}) of the equivariant regulator,
we are in fact interested in computing the inverse $-\langle P, Q \rangle_1$.
From the definition (\ref{equiv BD pairing}) of $\langle\ , \ \rangle$, we easily derive the explicit expression 
\begin{equation}\label{explicit formula 2}
  -\langle P, Q \rangle_1 \equiv \sum_{g \in G} \left( \sum_{w \in \Sigma(F)}
    \langle \res_w(x^g), \delta_w(y_w) \rangle_{F_w,p^n} \right) g^{}
 \pmod{I_p(G)^2},
\end{equation}
which is convenient for the explicit evaluations which we will describe in the next subsection.

\subsection{Algorithmic evaluation of the pairing}
In this subsection we describe how we numerically evaluate the right hand side of (\ref{explicit formula 2}). From an algorithmic point of view
we are mainly interested in the case of elliptic curves and, for this reason, we henceforth assume that $E := A = A^t$
is an elliptic curve defined over $k$. Furthermore, we assume that $F/k$ is cyclic of degree $p$ where as before
$p$ is an odd prime such that $E$, $F/k$  and $p$ satisfy  the hypotheses of Section \ref{hyp}.

The central computational problem in the numerical evaluation of the Mazur-Tate pairing, using the
  approach of Bertolini and Darmon as described in the previous section, is first of all the computation of the generalised Selmer group
  $\Sel^{(p)}_\Sigma(E/F)$. We will closely follow the method of Schaefer and Stoll \cite{SS03} to compute this group.

We fix a finite set $V$ of places of $F$ containing the $p$-adic places and all places $w$ such that the Tamagawa number
$c_w$ of $E$ at $w$ is divisible by $p$. We also fix an admisible set $\Sigma$ of places of $k$ as in Section \ref{strategy}.
We then write $H^1(F, E[p]; V\cup \Sigma(F))$ for the group of cohomology classes in $H^1(F, E[p])$ that are unramified outside
$V \cup \Sigma(F) $. Now
the result of \cite[Prop.~3.2]{SS03} shows that $\Sel^{(p)}(E/F)$ is  given by 
\[
   \{ \xi \in H^1(F, E[p]; V \cup \Sigma(F))
      \mid \res_w(\xi) \in \delta_w(E(F_w)/pE(F_w)) \text{ for all } w \in V \cup \Sigma(F) \},
\]
whereas a slight generalization of the above mentioned result implies that the generalised Selmer group $\Sel_\Sigma^{(p)}(E/F)$
is equal to
\begin{equation}\label{localconditions}
  \{ \xi \in H^1(F, E[p]; V \cup \Sigma(F)) \mid
  \res_w(\xi) \in \delta_w(E(F_w)/pE(F_w)) \text{ for all } w \in V \setminus \Sigma(F) \}.
\end{equation}

We fix an algebraic closure $F^c$ of $F$ and let $W \sseq E[p] \setminus \{0\}$ be a $G_F$-invariant
spanning set for $E[p]$. Thus $W$ is a union of $G_F$-orbits containing an $\Fp$-basis of $E[p]$.
  Note that in the generic case
  $G_F$ acts transitively on $E[p] \setminus \{0\}$, so that in such cases we are forced to use $W = E[p] \setminus \{0\}$.

We write $A^c := \Map(W, F^c)$ for the
set of maps from $W$ to $F^c$. Then the Galois group $G_F$ acts on $A^c$ by conjugation, i.e., for $\sigma \in G_F$,
  $a \in A^c$ and $P \in W$ one has $(\sigma a)(P) = \sigma( a(\sigma^{-1}(P)) )$.

We denote by $A = \Map_{G_F}(W, F^c) := \Map(W, F^c)^{G_F}$ the set of $G_F$-invariant maps in $A^c$.
Then $A$ is a finite dimensional {\'e}tale $F$-algebra.
Explicitly, let $P_1, \ldots, P_s \in W$ be a set of $G_F$-orbit representatives of $W$ and set
  \[
    H_i := \{ \sigma \in G_F \mid \sigma(P_i) = P_i \}
  \]
and $L_i := {(F^c)}^{H_i}$. Then each $L_i/F$ is a finite separable field extension
and we have a canonical isomorphism
\[
A \lra \prod_{i=1}^s L_i, \quad a \mapsto \left( a(P_i) \right)_{1 \le i \le s}.
\]

The Weil pairing $e_p \colon E[p] \times E[p] \lra \mu_p(F^c)$ defines a map
\[
\omega \colon E[p] \lra  \mu_p(A^c) := \Map(W, \mu_p(F^c)), \quad P \mapsto e_p(P, \ \underline\ \ ).
\]
This map induces a homomorphism in cohomology
\begin{equation}\label{inducedbyWeil}
\bar\omega \colon H^1(F, E[p]) \lra H^1(F, \mu_p(A^c))
\end{equation}
which, by \cite[Prop.~4.3]{SS03}, is known to be injective if $p \nmid |\Gal(F(E[p])/F)|$ or $p \nmid |W|$.

In addition, by an immediate generalization of Hilbert's Theorem 90, we have a Kummer isomorphism 
\begin{equation}\label{extendedKummer}
  \kappa \colon  H^1(F, \mu_p(A^c)) \lra A^\times / A^{\times p}
\end{equation}
which combined with $\bar\omega$ defines an embedding (assuming, for example, that $p \nmid |W|$)
\[
H^1(F, E[p]) \hookrightarrow A^\times / A^{\times p}.
\]

Roughly speaking, the algorithm of Schaefer and Stoll \cite{SS03}, in a first step, computes
$H^1(F, E[p]; V \cup \Sigma(F)) $ as a subset
of $A^\times / A^{\times p}$ via the embedding $\kappa \circ \bar\omega$ and then, in a second step, checks the local conditions
occurring in (\ref{localconditions}).

To describe the first step we let $L/F$ be a finite extension. For a finite place $t$ of $L$ we write
  $\ord_t \colon L^\times \lra \Ze$ for the associated normalised valuation.
If $T$ is a finite set of finite places of $F$, we set
\[
L(T,p) := \{ a \in L^\times/L^{\times p} \mid \ord_t(a) \in p\Ze \text{ for all } t \not\in T(L) \},
\]
and more generally, if $A \simeq \prod_{i=1}^s L_i$ is an \'etale $F$-algebra as above, we define 
\[
A(T, p) := \prod_{i=1}^s L_i(T, p).
\]
Then, using the embedding $H^1(F, E[p]) \hookrightarrow A^\times / A^{\times p}$ given by $\kappa\circ\bar\omega$,
the result of \cite[Cor.~5.9]{SS03} shows the equality
\[
H^1(F, E[p]; V \cup \Sigma(F))  = H^1(F, E[p]) \cap A(V \cup \Sigma(F), p).
\]

The work of Schaefer and Stoll now describes how to compute the group $H^1(F, E[p]) $ inside $A^\times / A^{\times p}$
and then, by intersecting with the finite group $A(V \cup \Sigma(F), p)$, we obtain $H^1(F, E[p]; V \cup \Sigma(F))$ as a subgroup of 
$A^\times / A^{\times p}$. For further details we refer the reader to \cite{SS03}.

Testing the local conditions in the second step (see again \cite{SS03} for details)
we finally obtain both $\Sel^{(p)}(E/F)$ and $\Sel_\Sigma^{(p)}(E/F)$
as subgroups of $A^\times / A^{\times p}$, or even better, as subgroups of the  finite group $A(V \cup \Sigma(F), p)$.
Note, however, that the computation of  $A(V \cup \Sigma(F), p)$ requires the computation of ideal class groups and units
in all of the fields $L_i$, $i = 1,..,s$.

In this context we recall that in the generic case we have to use $W = E[p] \setminus \{0\}$. In any such cases
  we will fix $S_0 \in  W$ and set $L := F(S_0)$, so that $[L:F] = p^2-1$ and $A \simeq L$.

For any place $w$ in $\Sigma(F)$ we fix an embedding $\iota_w:F\to F_w$ and
set $A_w^c :=  F_w^c \otimes_F A$ and $A_w := F_w \otimes_F A$. Note that there is a canonical isomorphism
\[
A_w  \stackrel{\simeq}\lra \Map_{G_{F_w}}(W, F_w^c), \quad z \otimes a \mapsto \left( S \mapsto z\iota_w(a(S)) \right).
\]
We then consider the following commutative diagram
\begin{equation}\label{F and F_w diagram}
\xymatrix{
  E(F)/pE(F) \ar[r]^\delta \ar[d]^{\iota_w} & H^1(F, E[p]) \ar[r]^{\bar\omega} \ar[d]^{\res_w} & H^1(F, \mu_p(A^c)) \ar[r]^\kappa \ar[d]^{\res_w} &
  A^\times / A^{\times p} \ar[d]^{\iota_{w,*}} \\
  E(F_w)/pE(F_w) \ar[r]^{\delta_w} & H^1(F_w, E[p]) \ar[r]^{\bar\omega_w} & H^1(F_w, \mu_p(A_w^c)) \ar[r]^{\kappa_w}  &
  A_w^\times / A_w^{\times p},
}
\end{equation}
where $\iota_{w,*}$ is induced by composition with $\iota_w$ while the maps $\bar\omega_w$ and $\kappa_w$ are the local analogues of
  (\ref{inducedbyWeil}) and (\ref{extendedKummer}), respectively, and $\res_w$ is the localisation map.

The local Tate pairing 
\[
\langle\ , \ \rangle_{F_w,p} \colon H^1(F_w, E[p]) \times H^1(F_w, E[p])  \lra \Qp / \Zp
\]
of (\ref{local Tate pairing}) induces a pairing $\langle\ , \ \rangle_{A_w}$ on the image of $\kappa_w \circ \bar\omega_w$. In what follows we will describe
the computation of  $\langle\ , \ \rangle_{A_w}$.

We write $q_{A_w}$ for the unique quadratic form such that, for all 
\[
a,b \in (\kappa_w \circ \bar\omega_w)\left( H^1(F_w, E[p])  \right),
\]
one has
\[
\langle a, b \rangle_{A_w} = q_{A_w}(ab) - q_{A_w}(a) - q_{A_w}(b).
\]

Let
\[
  \{\ , \ \}_{F_w,p} \colon F_w^\times/F_w^{\times p} \times  F_w^\times/F_w^{\times p} \lra \mu_p
\]
denote the Hilbert symbol.
We henceforth assume that $E[p]$ is contained in $E(F_w)$ (which is always the case for places $w \in \Sigma(F)$ for an admissible set
$\Sigma$ of places of $k$). In this particular case one can shift the problem of computing the local Tate pairing to the computation
of Hilbert symbols which is easy for places $w$ which are prime to $p$ by \cite[Ch.~V, Prop.~3.4]{Neu}.

We fix generators $S, T \in E(F_w)$ of $E[p]$. Then $e_p(T, S)$ generates $\mu_p$  and we define a map
\[
  \xi_{S,T} \colon \mu_p \lra \Ze / p\Ze
\]
by $\xi_{S,T}(e_p(T, S)) =  1 + p\Ze$.  For $\bar a \in A_w^\times / A_w^{\times p}$ with $a \in A_w \simeq \Map_{G_w}(W, \bar F_w)$ we obtain a well defined map
  $\bar a \colon W \lra F_w^\times / F_w^{\times p}$.

Fisher and Newton in \cite{FisherNewton} have given an explicit description of the local Tate pairing
  associated with the $p$-torsion of an elliptic curve in the special case $p=3$. Their formulas have subsequently been generalised by Visse
  in  \cite{Visse} to any odd prime $p$. In particular, from \cite[Th.~3.7 and 3.11]{Visse} we obtain, for arbitrary odd $p$, an equality
\[
q_{A_w}(\bar a) = \xi_{S,T} \left( \left\{ \bar a(S), \bar a(T)  \right\}_{F_w,p} \right)
\]
for all $\bar a \in  (\kappa_w \circ \bar\omega_w)\left( H^1(F_w, E[p])  \right)$. As a consequence we obtain
\begin{equation}\label{local Tate formula}
  \langle \bar a, \bar b \rangle_{A_w} = \xi_{S,T}
  \left( \frac{\{\bar a(S), \bar b(T)\}_{F_w,p}}{\{\bar a(T), \bar b(S)\}_{F_w,p}} \right).
\end{equation}

We are now in a position to describe the explicit computation of $-\langle P, Q \rangle_1$
using the formula in (\ref{explicit formula 2}).
Let $\tilde y$ denote the image of $Q$ in $E_\Sigma(\mathbb{A}_F)/p$ as in Section \ref{strategy}.
Assuming that the point $Q$ is not divisible by $p$ ensures that $\tilde y \ne 0$ because of the injectivity of the map (\ref{injectivity}).

For each $v \in \Sigma$ we fix a place $\hat v$ of $F$ lying over $v$. For each $w\in\Sigma(F)$ we define
\[
y_w := \begin{cases} \tilde y_{\hat v}, & \text{if } w \mid v \text{ and } w = \hat v, \\
                                   0,                         & \text{ otherwise }.
            \end{cases}
\]
Since $\tilde y$ is fixed by $G$ it is easy to see that $y := (y_w)_{w \in \Sigma(F)} \in E_\Sigma(\mathbb{A}_F)/p$
satisfies $\Tr_G(y) = \tilde y$. Hence the formula in (\ref{explicit formula 2}) simplifies to
 \begin{equation}\label{explicit formula 3}
-\langle P, Q \rangle_1 \equiv \sum_{g \in G} 
\left( \sum_{v \in \Sigma} \langle \res_{\hat v}(x^g), \delta_{\hat v}(y_{\hat v}) \rangle_{F_{\hat v},p} \right) g^{}
\pmod{I_p(G)^2}.
\end{equation} 
    
Next we explain how to compute $x \in \Sel^{(p)}_\Sigma(E/F)$ with $\Tr_G(x) = \tilde x$. The generalised Selmer group
$\Sel^{(p)}_\Sigma(E/F)$ is computed as a subgroup of $A^\times / A^{\times p}$ which carries an action of $G$ induced by conjugation.
Explicitly, for all $\sigma \in G$, $a \in A$ and $S \in W$ we choose an arbitrary lift $\hat\sigma \in G_k$ of $\sigma$ and set
\begin{equation}\label{G action}
(\sigma a)(S) = \hat\sigma(a(\hat\sigma^{-1}(S))).
\end{equation}
As sketched in Section \ref{comm impl} below this action is explicitly computable. We can therefore compute the $\Fp$-representation
of $G$ induced by $\Sel^{(p)}_\Sigma(E/F)$ and then represent $\Tr_G$ as a linear map 
$\Sel^{(p)}_\Sigma(E/F) \lra \Sel^{(p)}_\Sigma(E/F)$ and thus compute a preimage $x$ of $\tilde x$. 

In the same way we can compute the elements $x^g$ occuring in (\ref{explicit formula 3}).
Note that as an element of 
$A^\times / A^{\times p}$ the element $\tilde x$ is given by $(\kappa\circ\bar\omega\circ\delta)(P)$. The computation of the map
  \[
    H := \kappa \circ \bar\omega \circ \delta
  \]
  is quite non-trivial and explained in \cite[Ch.~2]{Sch15a, Sch15b} and \cite[Ch.~3]{SS03},
  as well as the computation of the local analogue
  \[
    H_w := \kappa_w \circ \bar\omega_w \circ \delta_w.
  \]

For each $v \in \Sigma$ we now compute the completion $F_{\hat v}$ together with an embedding 
$\iota_v \colon F \lra F_{\hat v}$ and define $a_{P, g, \hat v} := \res_{\hat v}(x^g) = \iota_v \circ x^g$.

In order to compute $\delta_{\hat v}(y_{\hat v})$ we use the commutativity of diagram  (\ref{F and F_w diagram}) and define
\[
a_{Q, \hat v} := \kappa_{\hat v}( \bar\omega_{\hat v}( \delta_{\hat v}( \iota_{\hat v}(Q) ))) =
\iota_{\hat v} \circ ( \kappa( \bar\omega(\delta(Q))) = 
\iota_{\hat v} \circ H(Q)
\]
and finally obtain
\[
\langle \res_{\hat v}(x^g), \delta_{\hat v}(y_{\hat v}) \rangle_{F_{\hat v},p} =
\xi_{S,T}\left(
   \frac{\{a_{P,g, \hat v}(S), a_{Q, \hat v}(T)\}_{F_w,p}}{\{a_{P,g,\hat v}(T), a_{Q, \hat v}(S)\}_{F_w,p}} 
\right).
\]

\subsection{Comments on the implementation}\label{comm impl}
In this subsection we discuss the restrictions on our MAGMA implementation, where we must always assume 
  that $G_F$ acts   transitively on  $E[p] \setminus \{0\}$. Hence, for the rest of this subsection, we set
  $W = E[p] \setminus \{0\}$ and fix a point $S_0 \in W$. As before, $F/k$ is a cyclic extension of degree $p$ where $p$ is an odd
prime.

We choose $\lambda \in \Qu$ such that the elements
\[
w_S := y_S + \lambda x_S, \quad S = (x_S, y_S) \in W,
\]
are pairwise distinct. Since $G_F$ acts transitively on $W$ the polynomial
\[
f(x) := \prod_{S \in W} (x - w_S) \in F[x]
\]
is irreducible. It is easily seen that $F(S_0) = F(w_{S_0})$. We therefore may and will use $L := F(w_{S_0})$.

This has the following
useful consequence for the computation of $a_w := \iota_w \circ a$, where  $a$ is an element of $A = \Map_{G_F}(W, F^c)$ and
$\iota_w$ an embedding of $F$ into $F_w$. Namely, if we assume that $a \in A$ corresponds to
$\bar h \in F[x] / (f(x))$ under the composite map
\[
A \stackrel\alpha\lra L \stackrel{\beta^{-1}}\lra F[x]/(f(x)), \quad \alpha(a) := a(S_0), \quad \beta(\bar h) := h(w_{S_0}),
\]
then we have $a(S_0) = h(w_{S_0})$. 
Assume further that $\tilde S \in E(F_w)[p]$ and let $S \in W$ be such that $\iota_w(S) = \tilde S$.
Then a straightforward computation shows
that $a_w(\tilde S) = (\iota_wh)(w_{\tilde S})$.

A further useful consequence is a particularly simple description of the action of $G$ on $A$. Recall that $\sigma \in G$ acts on $A$
by conjugation as in (\ref{G action}). If
$\bar h \in  F[x]/(f(x))$ corresponds to $a \in A$, then we claim that $\sigma a$ corresponds to $\overline{\sigma h}$.
Indeed, if $\tau(S_0) = \hat\sigma^{-1}(S_0)$ with $\tau \in G_F$, then
\[
(\sigma a)(S_0) = \hat\sigma(a(\hat\sigma^{-1}(S_0))) = \hat\sigma(\tau(h(w_{S_0}))) = \hat\sigma(h(\tau(w_{S_0}))) = (\sigma h) (w_{S_0}).
\]

In our implementation we additionally assume $k=\Qu$ and $p=3$. In this case  explicit formulae for the computation
of the generalised Selmer group based on the work of Schaefer and Stoll in
\cite{SS03} are given in   \cite[Sec.~7]{Geis}. These are used throughout in our implementation.

We summarise the above discussion in the following remark.
\begin{remark}{\em
  Our MAGMA implementation is restricted to handle the case $k=\Qu$, $p=3$ and $\mathrm{rk}(E(F)) = \mathrm{rk}(E(\Qu)) = r$ with $r > 0$.
  Assuming the validity of the classical Birch and Swinnerton-Dyer conjecture for $E/F$ the implementation will check the
  hypotheses (a) - (i) of Section \ref{hyp} as well as triviality of $\Sha_p(E_F)$
  and then numerically verify the rationality part of the
  eTNC up to the precision of the computation.
  We then rigorously prove the condition from Theorem \ref{main} (by rewriting it in terms of congruence relations in the manner suggested in Remark \ref{acta}). 
}\end{remark}

\subsection{Computations}\label{computational results}

Our MAGMA implementation can be used to numerically verify the validity of the refined Birch and Swinnerton-Dyer conjecture for pairs
$(E,F/k)$ where
\begin{itemize}
  \item $k=\Qu$ and $E/\Qu$ is an elliptic curve with global minimal Weierstrass equation,
  \item $F$ is the unique subfield of degree $p=3$ in $\Qu(\zeta_\ell)/\Qu$ for primes $\ell \equiv 1 \pmod{3}$,
  \item $\mathrm{rk}(E(F)) = \mathrm{rk}(E(\Qu)) = r$ for some $r > 0$,
  \item $E$ and $F$ satisfy the hypotheses (a) - (i) of Section \ref{hyp}.
\end{itemize}
Note that the condition   $\mathrm{rk}(E(F)) = \mathrm{rk}(E(\Qu)) = r$ ensures that $E(F)_p \simeq \Ze_p^r$
is not projective as a $\ZpG$-module.

Here is the list of examples for $r=1$, $\ell < 50$ and conductor less than $100$. We specify elliptic curves by their Cremona reference. 
\begin{itemize}
\item [-]  $E$ is $37a1$ and $\ell \in \{13,19\}$.
\item [-]  $E$ is $43a1$ and $\ell \in \{7,13,37\}$.
\item [-]  $E$ is $53a1$ and $\ell \in \{13, 19, 31,43 \}$.
\item [-]  $E$ is $58a1$ and $\ell \in \{7,13,19,31,43 \}$.
\item [-]  $E$ is $61a1$ and $\ell \in \{7, 13, 43 \}$.
\item [-]  $E$ is $65a2$ and $\ell \in \{19, 37, 43 \}$.
\item [-]  $E$ is $77a1$ and $\ell \in \{19, 37 \}$.
\item [-]  $E$ is $79a1$ and $\ell \in \{13, 19, 37 \}$.
\item [-]  $E$ is $82a1$ and $\ell \in \{13, 19, 43 \}$.
\item [-]  $E$ is $82a2$ and $\ell \in \{13, 19, 43 \}$.
\item [-]  $E$ is $83a1$ and $\ell \in \{7, 13, 37 \}$.
\item [-]  $E$ is $88a1$ and $\ell \in \{7,43 \}$.
\item [-]  $E$ is $89a1$ and $\ell \in \{19, 31, 37 \}$.
\item [-]  $E$ is $91a1$ and $\ell \in \{31 \}$.
\item [-]  $E$ is $389a1$ and $\ell \in \{7,13,43 \}$.
\item [-]  $E$ is $433a1$ and $\ell \in \{7, 13, 31, 37 \}$.
\item [-]  $E$ is $446d1$ and $\ell \in \{19 \}$.
\end{itemize}

For $r=2$, $\ell < 50$ and conductor less than $500$ we computed the following examples.
\begin{itemize}
\item [-]  $E$ is $389a1$ and $\ell \in \{7,13,43 \}$.
\item [-]  $E$ is $433a1$ and $\ell \in \{7, 13, 31, 37 \}$.
\item [-]  $E$ is $446d1$ and $\ell \in \{19 \}$.
\end{itemize}

\subsection*{Acknowledgements} 
The authors are very grateful to David Burns for many interesting discussions and much encouragement, as well as to Christian Wuthrich for his constant interest in our work and many related discussions. We also wish to thank the referee for their careful reading of the manuscript. In particular,
 their comments and suggestions on Section \ref{comp mtp} significantly helped to improve the presentation of this part of the manuscript.

The first author wants to thank Chris Geishauser who in the context of his master thesis \cite{Geis}  provided
many MAGMA routines used in the computation of our numerical examples.

The second author is grateful to Stefano Vigni for some pertinent discussions. He also acknowledges financial support from the 
Spanish Ministry of Science and Innovation, through the `Severo Ochoa Programme for Centres of Excellence in R\&D' [SEV-2015-0554] and [CEX-2019-000904-S] as well as through projects [MTM2016-79400-P] and [PID2019-108936GB-C21].

\Addresses
\end{document}